\newtheorem{thm}{Theorem}[section]
\newtheorem{con}[thm]{Conjecture}
\newtheorem{prop}[thm]{Proposition}
\theoremstyle{definition}
\newtheorem{defin}[thm]{Definition}
\newtheorem{exam}[thm]{Example}
\theoremstyle{remark}
\newtheorem*{rem}{Remark}
\newcommand { \ib }[1] {\textit{\textbf{#1}}}
\newcommand{\R}{\mathbb{R}}
\newcommand{\Q}{\mathbb{Q}}
\newcommand{\Z}{\mathbb{Z}}
\begin{document}
\renewcommand{\ib}{\mathbf}
\renewcommand{\proofname}{Proof}
\renewcommand{\phi}{\varphi}
\newcommand{\conv}{\mathrm{conv}}

\title[]{On $\pi$-surfaces of four-dimensional parallelohedra}
\author{Alexey~Garber$^{*}$}

\address{Faculty of Mechanics and Mathematics, Moscow State University, Russia, 119991, Moscow, Leninskie gory, 1, and Delone Laboratory of Discrete and Computational Geometry, Yaroslavl State University, 14 Sovetskaya str., Yaroslavl, 150000, Russia.}
\email{alexeygarber@gmail.com}

\thanks{{$^{*}$}Supported by the Russian Foundation of Basic Research grants 11-01-00633-a and 11-01-00735-a, and the Russian government project 11.G34.31.0053.}

\date{\today}

\begin{abstract}
We show that every four-dimensional parallelohedron $P$ satisfies a recently found condition of Garber, Gavrilyuk \& Magazinov sufficient for the Voronoi conjecture being true for $P$. Namely we show that for every four-dimensional parallelohedron $P$ the group of rational first homologies of its $\pi$-surface is generated by half-belt cycles.
\end{abstract}

\maketitle

\section{The Voronoi conjecture and $\pi$-surface of parallelohedron}

In this paper we study one combinatorial property of four-dimensional parallelohedra that is closely related to the Voronoi conjecture. 

\begin{defin}
A convex polytope $P\subset\R^d$ is called a {\it parallelohedron} if we can tile $\R^d$ with translations of $P$.
\end{defin}

We will consider only {\it face-to-face} tilings by parallelohedra when intersection of any two copies of $P$ is face of both, possible empty. Some parallelohedra allows {\it non face-to-face} tilings too but as it is in the usual brickworks. But as it was shown by Venkov \cite{Ven} and McMullen \cite{McM} every parallelohedron $P$ has a correspondent (unique) face-to-face tiling $\mathcal{T}(P)$. So for now we will consider only face-to-face tilings.

The Voronoi conjecture establishes a direct connection between $d$-dimensional parallelohedra and Dirichlet-Voronoi domains of $d$-dimensional lattices.

\begin{defin}\label{vortiling}
Given a lattice $\Lambda\subset\R^d$. The {\it Dirichlet-Voronoi domain} of the lattice $\Lambda$ is the polytope consist of points of $\R^d$ that are closer to a fixed point $O\in \Lambda$ than to any other point of $\Lambda$.
\end{defin}

It is clear that Dirichlet-Voronoi domain of an arbitrary $d$-dimensional lattice is a $d$-dimensional parallelohedron. The Voronoi conjecture claims that we can describe all parallelohedra via Dirichlet-Voronoi domains.

\begin{con}[The Voronoi conjecture on parallelohedra, \cite{Vor}]\label{vorcon}
Every $d$-dimensional parallelohedron is an affine image of Dirichlet-Voronoi domain of some $d$-dimensional lattice.
\end{con}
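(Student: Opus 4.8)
The plan is to reduce the conjecture to the existence of a suitable \emph{canonical scaling} on $P$ and then to an algebraic-topological condition on its $\pi$-surface, keeping the dimension $d$ arbitrary throughout. Recall the Voronoi–Delone reformulation: a $d$-dimensional parallelohedron $P$ is an affine image of a Dirichlet–Voronoi domain if and only if one can assign a positive real number $s_F$ to each facet $F$ of the tiling $\mathcal{T}(P)$ so that the scaled outward normals $s_F\mathbf{n}_F$ realize $P$ as the Voronoi cell of a lattice, i.e. so that $\mathcal{T}(P)$ becomes dual to a Delone subdivision. Because every parallelohedron admits a unique face-to-face tiling (Venkov, McMullen), the combinatorics of $P$ are rigid, and the task becomes the purely algebraic one of solving for the scalars $\{s_F\}$ subject to the local constraints the tiling imposes.

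The second step is to turn the search for $\{s_F\}$ into a local-to-global consistency problem. Around each codimension-$2$ face of $\mathcal{T}(P)$ the incident facets form a belt, which by Minkowski's theorem consists of either four or six facets; each belt imposes a linear relation on the scalars $s_F$. The $4$-belt relations are automatically consistent, while the $6$-belts are the ones that genuinely constrain a solution, so collecting all belt relations produces a cocycle condition on the dual complex. A canonical scaling then exists precisely when the corresponding obstruction class on the $\pi$-surface vanishes, which is equivalent to requiring that the rational first homology $H_1(\pi\text{-surface};\Q)$ be generated by the half-belt cycles, since these are exactly the cycles along which the belt relations can always be resolved. This equivalence is the content of the Garber–Gavrilyuk–Magazinov sufficient condition, and verifying it in full generality would complete the proof of the conjecture.

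The hard part, and the precise reason the statement remains a conjecture for general $d$, is this last homological generation: one must show that in \emph{every} dimension the group $H_1(\pi\text{-surface};\Q)$ is spanned by half-belt cycles. For small $d$ one can check this through a classification-driven case analysis of the possible belt structures, and that is the route the present paper pursues for $d=4$; but for general $d$ no such classification is available, the $\pi$-surface can carry complicated topology, and no uniform argument is known that forces an arbitrary $1$-cycle to decompose into half-belt cycles. I therefore expect the main obstacle to be supplying a dimension-independent mechanism — for instance an inductive reduction to the $\pi$-surfaces of lower-dimensional parallelohedra arising as faces of $P$, or a direct analysis of the free abelian group generated by facet vectors modulo the belt relations — that rules out any exotic homology class obstructing the canonical scaling. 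Absent such a mechanism, the reduction above converts the Voronoi conjecture into the homological generation problem but does not by itself settle it in arbitrary dimension.
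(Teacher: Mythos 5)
You have not proved anything, and you could not have: the statement you were given is Conjecture \ref{vorcon} itself, which remains open in general, and the paper contains no proof of it. The paper's actual contribution is strictly weaker: it verifies the sufficient condition of Theorem \ref{homologies} (that $H_1(P_\pi,\Q)$ is generated by half-belt cycles) for all four-dimensional parallelohedra, via the Delone--Vallentin classification split into the non-zonotopal case (Theorem \ref{nonzonthm}) and the zonotopal case (Theorem \ref{zonthm}), and the author explicitly notes that even this does not yield a new proof of the four-dimensional Voronoi conjecture, since the classification itself rests on Delone's proof. Your final paragraph correctly self-diagnoses this: the ``dimension-independent mechanism'' forcing every $1$-cycle of $P_\pi$ to decompose into half-belt cycles is precisely the missing ingredient, so your text is a reduction sketch plus an honest admission of the gap, not a proof. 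As a description of the surrounding framework it matches the paper's setting (canonical scalings, belts, the $\pi$-surface), so the gap is not one of approach but of substance: the key step is an open problem.

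One genuine error of logic should be flagged beyond the acknowledged incompleteness: you assert that a canonical scaling exists ``precisely when'' the obstruction class vanishes, and that this is ``equivalent to'' $H_1(P_\pi,\Q)$ being generated by half-belt cycles. Theorem \ref{homologies} is a one-way implication. The equivalence between the Voronoi conjecture for $P$ and the existence of a canonical scaling is indeed classical, but the homological condition is only sufficient: half-belt cycles and trivially contractible cycles carry trivial gain, so if they generate $H_1(P_\pi,\Q)$ the scaling extends globally; nothing excludes a canonical scaling on a parallelohedron whose $\pi$-surface carries homology classes not spanned by half-belts. Consequently, even a hypothetical counterexample to the generation property in high dimension would not refute Conjecture \ref{vorcon}, and your proposed strategy --- ``verify generation in every dimension'' --- is strictly stronger than what the conjecture requires. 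Relatedly, your remark that the $4$-belt relations are ``automatically consistent'' glosses over the actual mechanism: the non-primitive codimension-$2$ faces are deleted in forming the $\delta$- and $\pi$-surfaces (Definition \ref{deltasur}) exactly because the propagation of scalars across them is where global monodromy can arise; the consistency is local, not automatic.
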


The Voronoi conjecture was poved for several cases but still remains unproved in general. In this section we will describe some known results concerning this conjecture.

The first result was obtained by Voronoi himself in 1909 \cite{Vor}.

\begin{defin}
A $k$-face $F$ of a parallelohedron $P\subset \R^d$ is called {\it primitive} if it belongs to exactly $d+1-k$ copies of $P$ in the correspondent tiling $\mathcal{T}(P)$.

If all $k$-faces of $P$ are primitive then $P$ and the correspondent face-to-face tiling $\mathcal{T}(P)$ are called {\it $k$-primitive}.
\end{defin}

\begin{thm}[G.~Voronoi, \cite{Vor}]\label{vorthm}
The conjecture $\ref{vorcon}$ is true for $0$-primitive (or just primitive) parallelohedra.
\end{thm}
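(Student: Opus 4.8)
The plan is to establish Theorem~\ref{vorthm} by producing, for every primitive parallelohedron $P$, a \emph{canonical scaling} of its tiling $\mathcal{T}(P)$. By a canonical scaling I mean an assignment of a positive number $s_F$ to each facet $F$ of $\mathcal{T}(P)$, invariant under the lattice of translations, such that for every $(d-2)$-face the outward normals of the facets incident to it, each scaled by the corresponding $s_F$, sum to the zero vector. The first step is the standard affine-invariant reformulation of Conjecture~\ref{vorcon}: $P$ is an affine image of a Dirichlet--Voronoi domain if and only if $\mathcal{T}(P)$ admits a canonical scaling. One direction, the one I need, is constructive: given the scaling one assigns to each tile a generator point so that adjacent tiles sharing a facet $F$ have generators differing by a vector along the normal of $F$ with length governed by $s_F$; the closing relations at the $(d-2)$-faces are exactly what guarantees that these differences form a cocycle that integrates to a well-defined point set, whose Dirichlet--Voronoi tiling is affinely $\mathcal{T}(P)$.

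With this reduction in hand, I would use primitivity to control the local structure. Since $P$ is primitive, every $(d-2)$-face belongs to exactly $d+1-(d-2)=3$ tiles and is therefore surrounded by exactly three facets. In the two-dimensional link of such a face the three outward normals are linearly dependent, and, because they cyclically surround the face, the dependence has an essentially unique strictly positive coefficient vector. These coefficients force the ratios among the three values $s_F$. Hence a canonical scaling, if it exists, is locally rigid: fixing $s_F$ on one facet determines it on every facet reachable by crossing successive triples, by repeatedly applying the forced ratios.

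The heart of the argument is global consistency of this propagation, i.e.\ that the value transported to a facet $F$ does not depend on the chain of triples used to reach it. Equivalently, the product of the forced multiplicative ratios around every closed cycle in the adjacency structure of facets must equal $1$. I would reduce this to a finite local verification as follows. The relevant cycles are generated by the elementary loops encircling the $(d-3)$-faces of $\mathcal{T}(P)$; a primitive $(d-3)$-face lies on exactly $4$ tiles, so its local dual is a simplex and, up to affinity, the configuration around it is a primitive three-dimensional one of a single combinatorial type, involving six facets grouped into four triples. On this fixed local model the product of the forced ratios is a concrete identity that one checks directly. Since $\R^d$ together with its stratification by the tiling makes these elementary loops generate all cycles, the vanishing of the monodromy on them yields a globally well-defined canonical scaling, and the reduction of the first paragraph then finishes the proof.

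I expect the global consistency step to be the true obstacle: the local triples only fix ratios, and there is no formal reason for the monodromy around a long loop to be trivial. It is precisely primitivity that rescues the argument, forcing every $(d-2)$-face to carry a simple three-term relation and every $(d-3)$-face to present the same controllable local model, thereby collapsing an a priori infinite list of consistency requirements to a single local identity.
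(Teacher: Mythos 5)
Two preliminary remarks. First, the paper itself contains no proof of Theorem \ref{vorthm}: it is Voronoi's classical result, quoted from \cite{Vor}; the closest the paper comes is the observation that Theorem \ref{homologies} ``covers'' it, the implicit derivation being that a primitive $P$ has no non-primitive ridges, so $P_\pi\cong\mathbb{RP}^{d-1}$ and $H_1(P_\pi,\Q)=0$, making the hypothesis of Theorem \ref{homologies} vacuous. So your proposal must be measured against the classical canonical-scaling argument, which is indeed what you are reconstructing. Most of your steps are sound: the reduction of Conjecture \ref{vorcon} to the existence of a canonical scaling is standard; $0$-primitivity does force every $(d-2)$-face to lie in exactly three tiles and every $(d-3)$-face in exactly four (you use both, and should note that primitivity of vertices propagates to all faces via the simplicial dual cells); and the local consistency around a primitive $(d-3)$-face is a true fact --- it amounts to the statement that every complete simplicial fan in $\R^3$ with four maximal cones is the normal fan of a tetrahedron, whose vertices provide the four dual points $q_1,\dots,q_4$ with $q_i-q_j$ parallel to the wall normals, killing the monodromy. (It is not, as you say, ``a single configuration up to affinity'' --- the local fan has moduli --- but the identity holds uniformly across the family, so this is harmless.)

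The genuine gap is the mismatch between your global-consistency argument and the lattice-invariance that your own definition of canonical scaling, and the integration step back to Conjecture \ref{vorcon}, require. Your generation claim --- that cycles of the facet-adjacency structure are generated by ridge triangles and elementary loops around $(d-3)$-faces --- is correct when read in the whole tiling $\mathcal{T}(P)$, because $\R^d$ is simply connected (for $d\geq 3$). But the scaling obtained by propagation there is only unique up to a global constant, and nothing in your argument shows it is invariant under lattice translations, equivalently that opposite facets of $P$ receive equal values; this is exactly what is used when the dual points are assembled into an affine image of the lattice. If instead you work modulo the lattice from the start, as your definition demands, the relevant cycle space is that of the quotient --- the Venkov-graph picture of this paper --- and it has essential generators beyond the $(d-3)$-loops: precisely the half-belt cycles, i.e.\ propagation paths from a facet $F$ to its opposite $-F$ along a belt, which become closed only after the identification. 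Their monodromy is in fact trivial, but proving this requires a separate argument (project the belt along the ridge onto a centrally symmetric hexagon and observe that all three ridge relations are proportional to the single linear relation among the three edge normals; alternatively invoke the central-inversion symmetry of the tiling), and your proposal never makes it. This omission is not cosmetic: the entire point of the framework of Theorem \ref{homologies} and of the gain-cycle set $C_P$ in this paper is that \emph{both} half-belt cycles \emph{and} trivially contractible cycles are needed as generators; the $(d-3)$-face loops alone do not suffice once one passes to the $\pi$-surface. The gap is fixable by the classical hexagon computation, but as written your proof is incomplete at exactly the step this machinery was designed to isolate.
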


\begin{thm}[O.~Zhitomirskii, \cite{Zhit}]\label{zhitthm}
The Voronoi conjecture $\ref{vorcon}$ is true for $(d-2)$-primitive parallelohedra.
\end{thm}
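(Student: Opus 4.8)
The plan is to deduce Theorem~\ref{zhitthm} from the existence of a \emph{canonical scaling} on the tiling $\mathcal{T}(P)$. Recall the reduction going back to Voronoi \cite{Vor}: an affine image of $P$ is a Dirichlet--Voronoi domain of a lattice if and only if one can attach to every facet $F$ of $\mathcal{T}(P)$ a positive weight $n(F)$, with $n(F)=n(-F)$, so that around every $(d-2)$-face $G$ the weighted outer normals close up, $\sum_{F\supset G} n(F)\,\mathbf{p}_F=0$, where $\mathbf{v}_F$ denotes the translation carrying $P$ to its neighbour across $F$ and $\mathbf{p}_F$ the outer facet normal. (The relation is forced by $\sum_{F\supset G}\mathbf{v}_F=0$ together with $Q\mathbf{v}_F=n(F)\mathbf{p}_F$ for the sought form $Q$, and Voronoi's theorem is that it is also sufficient.) So it suffices to construct such an $n$. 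The combinatorial input is that a $(d-2)$-face lies in exactly $d+1-(d-2)=3$ tiles precisely when its belt is hexagonal, while a non-primitive one lies in four tiles and has a parallelogram belt; hence $(d-2)$-primitivity of $P$ is \emph{equivalent} to every belt of $\mathcal{T}(P)$ being a $6$-belt.

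Next I would analyse a single hexagonal belt. Around a primitive $(d-2)$-face $G$ exactly three facets $F_1,F_2,F_3$ contain $G$; projecting along the span of $G$ collapses $G$ to a point at which three planar cells meet and sends $F_1,F_2,F_3$ to three rays bounding three convex sectors, with outer normals $\mathbf{p}_1,\mathbf{p}_2,\mathbf{p}_3$ in the transverse plane. Since these sectors are convex, the normals positively span the plane, so the homogeneous system $n_1\mathbf{p}_1+n_2\mathbf{p}_2+n_3\mathbf{p}_3=0$ has a one-dimensional solution space represented by a triple with all $n_i>0$. Thus each $6$-belt rigidly fixes the ratios $n(F_1):n(F_2):n(F_3)$, which gives a propagation rule: a value on one facet of a belt forces the values on the other two, and a value on a single seed facet propagates along every chain of belts.

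The heart of the argument is to show this propagation is globally consistent, that is, transporting a seed value around any closed chain of belts returns the original number; positivity is then automatically preserved and the resulting $n$ is a canonical scaling. Since $n$ must be invariant under the translation lattice $\Lambda$, it is a function on finitely many facet-orbits whose constraints are organised by belt-orbits; assembling the facets, belts and their incidences into the finite two-dimensional complex of the present paper---its $\pi$-surface---the obstruction to integrating the locally forced ratios becomes a class in the first homology of this complex. Under $(d-2)$-primitivity every relevant cell is a triangle, so the complex is simplicial in dimension two, and I would reduce consistency to the statement that its rational first homology is generated by the half-belt cycles, the boundaries of these triangles, around each of which $n_1\mathbf{p}_1+n_2\mathbf{p}_2+n_3\mathbf{p}_3=0$ makes the holonomy trivial by construction. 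Proving this generation is the main obstacle, and it is exactly the phenomenon that the present paper isolates and settles for all four-dimensional parallelohedra; in the $(d-2)$-primitive setting it should follow from the all-triangle structure forced by hexagonal belts. This mirrors Voronoi's proof of Theorem~\ref{vorthm}, where full $0$-primitivity already makes the whole dual complex simplicial, Zhitomirskii's weaker hypothesis asking only that the codimension-two faces be primitive---exactly what is needed to keep every belt hexagonal.
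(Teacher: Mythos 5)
The paper does not prove this statement at all: Theorem~\ref{zhitthm} is quoted as a known result of Zhitomirskii \cite{Zhit}, so your attempt can only be judged on its own merits. Your overall framework is sound and is in fact the framework of Theorem~\ref{homologies} (Garber--Gavrilyuk--Magazinov), re-derived from scratch: reduce the Voronoi conjecture to the existence of a canonical scaling $n(F)$, observe that each hexagonal belt determines the ratios $n(F_1):n(F_2):n(F_3)$ up to a positive scalar, and recognize that the obstruction to patching these local ratios together is a multiplicative holonomy homomorphism on closed cycles of the $\pi$-surface. But the proof is not complete: the step you yourself call ``the main obstacle'' --- that the holonomy is trivial around \emph{every} closed cycle, equivalently that $H_1(P_\pi,\Q)$ is generated by half-belt cycles --- is exactly the substance of the theorem, and you leave it at ``it should follow from the all-triangle structure forced by hexagonal belts.'' That is a genuine gap, not a routine verification: for general parallelohedra this generation statement is precisely the open condition the present paper is devoted to checking in dimension four.

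What you are missing is that in the $(d-2)$-primitive case this last step is essentially trivial, for a reason you never invoke. By Definition~\ref{deltasur}, $P_\delta$ is $\partial P$ with the \emph{non-primitive} codimension-$2$ faces deleted; if $P$ is $(d-2)$-primitive there is nothing to delete, so $P_\delta\cong\mathbb{S}^{d-1}$ and $P_\pi\cong\mathbb{RP}^{d-1}$. Then $H_1(\mathbb{RP}^{d-1},\Z)\cong\Z/2\Z$ for $d\geq 3$, hence $H_1(P_\pi,\Q)=0$, so the generation condition of Theorem~\ref{homologies} holds vacuously; alternatively, in your holonomy language, the gain homomorphism takes values in the torsion-free group $(\R_{>0},\times)$, so it factors through $H_1$ modulo torsion and is therefore identically trivial, which gives global consistency and positivity at once. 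This is exactly the observation the paper itself uses in the analogous spot inside the proof of Theorem~\ref{nonzonthm} (``there are no non-primitive faces of codimension $2$ and $P_\pi\cong\mathbb{RP}^3$ and its rational homologies are trivial''). With that observation inserted, and with the canonical-scaling criterion correctly attributed to Voronoi's method (its sufficiency is a nontrivial theorem you may cite but should not treat as obvious), your outline closes into a proof; without it, the argument stops exactly where the difficulty begins.
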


It is easy to see that if a parallelohedron $P$ is $k$-primitive then it is $(k-1)$-primitive also so Zhitmorski theorem implies the theorem of Voronoi. Zhitomirski formulated his result not with notion of primitive face but using the notion of {\it belt} introduced by Minkowski.

\begin{thm}[H.~Minkowski, \cite{Min}]
Every $d$-dimensional parallelohedron satisfies the following conditions (so-called Minkowski conditions):
\begin{enumerate}
\item $P$ is centrally symmetric;
\item Every facet of $P$ is centrally symmetric;
\item Projection of $P$ along any its face of codimension $2$ is a parallelogram or centrally symmetric hexagon.
\end{enumerate}
\end{thm}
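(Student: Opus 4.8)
The plan is to exploit the face-to-face lattice tiling $\mathcal{T}(P)$ guaranteed by Venkov and McMullen. Write $\Lambda$ for the lattice of translations carrying $P$ onto the other tiles. Since $\mathcal{T}(P)$ is face-to-face, across each facet $F_i$ of $P$ sits a unique neighbouring tile $P+t_i$ with $t_i\in\Lambda$ and $F_i=P\cap(P+t_i)$; translating by $-t_i$ shows that $F_i-t_i$ is again a facet of $P$, whose neighbour is $P-t_i$. Thus the facets split into pairs $F_i\leftrightarrow F_i-t_i$, whose two members are translates of one another, hence parallel, of equal $(d-1)$-volume, and carrying opposite outer unit normals $u_i$ and $-u_i$. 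For condition (1) I would feed this pairing into Minkowski's uniqueness theorem, that a convex polytope is determined up to translation by the list of its facet outer unit normals together with the corresponding facet volumes. The pairing shows that the normal–volume data $\{(u_i,\mathrm{vol}\,F_i)\}$ of $P$ is invariant under $u_i\mapsto -u_i$, which is exactly the data of $-P$; hence $P=-P+c$ for some $c$, so $P$ is centrally symmetric about $c/2$.

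For condition (2) I would argue by induction on $d$, the cases $d\le 2$ being classical since every edge of a polygon is a centrally symmetric segment. Fix a facet $F$ of $P$ lying in a hyperplane $H$. The tiles of $\mathcal{T}(P)$ having a facet contained in $H$ all meet $H$ in translates of $F$, and, because the tiling is face-to-face and fills $\R^d$, these translates tile $H\cong\R^{d-1}$ face-to-face by translations. Therefore $F$ is a $(d-1)$-dimensional parallelohedron, and by the inductive hypothesis (Minkowski's conditions in dimension $d-1$) it is centrally symmetric.

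For condition (3), fix a face $G$ of codimension $2$ and let $\pi$ denote the projection of $\R^d$ along $\lin(G)$ onto a complementary $2$-plane $\Pi$. A facet of $P$ projects to an edge of the convex polygon $\pi(P)$ precisely when it contains a translate of $G$, i.e. when it lies in the belt of $G$; so the edges of $\pi(P)$ are the images of the belt facets, and by (1) the polygon $\pi(P)$ is centrally symmetric and thus has $2m$ edges. It remains to prove $m\in\{2,3\}$, for which I would analyse the tiles of $\mathcal{T}(P)$ containing $G$: under $\pi$ they map to translates of $\pi(P)$, each with a vertex at $p=\pi(G)$, which together cover a neighbourhood of $p$ and meet face-to-face there, so that the elementary planar classification forces exactly $3$ or $4$ tiles around $p$ and hence a belt of $4$ or $6$ facets. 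The main obstacle is exactly this last step: one must convert the global face-to-face property into the assertion that the star of $G$ projects to a genuine translative $2$-tiling of a neighbourhood of $p$, since the projections of tiles far from $G$ overlap and do not tile $\Pi$. I expect the cleanest route to confine all the work to the star of $G$ and to reduce the count to the two-dimensional fact that only parallelograms and centrally symmetric hexagons tile the plane by translation.
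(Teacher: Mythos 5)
The paper itself offers no proof of this theorem---it is quoted as a classical result with the reference to Minkowski---so your proposal has to stand on its own. Your argument for condition (1) does: pairing the facets into opposite parallel translates of equal volume and feeding this into Minkowski's uniqueness theorem is complete and is essentially Minkowski's own argument. The genuine gap is in condition (2). The claim that the tiles having a facet in the hyperplane $H$ of $F$ meet $H$ in translates of $F$ that tile $H$ face-to-face is false: in the tiling of $\R^3$ by truncated octahedra, the plane $H$ containing a square facet passes through the centers (hence the interiors) of the tiles of the adjacent layer, so the square facets lying in $H$ are isolated islands, and the rest of $H$ is covered by two-dimensional cross-sections of tiles rather than by facets. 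Worse, the statement you are trying to prove by induction---that a facet of a parallelohedron is a $(d-1)$-dimensional parallelohedron---is itself false, and this very paper contains the counterexample: the regular $24$-cell of Section \ref{non-zon} is a four-dimensional parallelohedron whose facets are regular octahedra, and the octahedron is not a parallelohedron (its $2$-faces are triangles, so it already violates condition (2) one dimension down). So no induction of this kind can be repaired. The standard argument runs the other way and uses the condition (1) you have already established: if $F = P \cap (P+t)$ and $P$ is centrally symmetric about $c$, then the point reflection $\sigma(x) = (2c+t) - x$ satisfies $\sigma(P) = P+t$ and $\sigma(P+t) = P$, hence $\sigma(F) = F$; a compact convex set invariant under a point reflection is centrally symmetric about its fixed point $c + t/2$, which lies in $F$ by convexity.

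For condition (3) your plan is the standard one, and the obstacle you single out is the real one; it is closed locally rather than globally: near a relative-interior point of $G$ each tile of the star of $G$ coincides with a cylinder in the direction $\lin(G)$ over a planar convex cone with apex $p$, these cones have disjoint interiors and cover a neighbourhood of $p$, and each is the tangent cone at a vertex of a translate of the centrally symmetric polygon $\pi(P)$, consecutive cones sharing the projection of a common belt facet, which is a full edge of both polygons because the tiling is face-to-face. Note, however, that even after this repair the count of $3$ or $4$ tiles around the single face $G$ does not by itself bound the number of edges of $\pi(P)$: you must run the same argument at every codimension-$2$ face of $P$ parallel to $G$ (i.e.\ at every vertex of $\pi(P)$), and use the forced edge-to-edge structure (each neighbour is the point reflection of its predecessor in an edge midpoint, hence again a translate) to conclude that translates of $\pi(P)$ tile the plane; only then does the planar classification you cite yield $4$ or $6$ edges. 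So condition (3) is incomplete but repairable along your lines, condition (1) is correct, and condition (2) needs to be replaced by the reflection argument above.
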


For arbitrary face $F$ of codimension $2$ we can take the set of all facets of $P$ parallel to $F$. This set of facet is called the {\it belt} $\mathcal{B}_F$. The second condition of Minkowski implies that the belt $\mathcal{B}_F$ is ``closed'', i.e. every facet from $\mathcal{B}_F$ contains exactly two faces parallel to $F$. The third condition of Minkowski implies that every belt of $P$ consist of $4$ or $6$ facets.

Among all faces of codimension $2$ the primitive are only those which generates $6$-belts, so the theorem of Zhitomirskii \ref{zhitthm} can be formulated as follows: if all belts of $P$ consist of $6$ facets then $P$ satisfies the Voronoi conjecture.

Further way of investigating combinatorial properties of parallelohedra and their connection with Voronoi conjecture was presented by Ordine \cite{Ord}. Ordine gave a condition that implies the Voronoi conjecture using local combinatorial structure of $(d-3)$-faces of the tiling $\mathcal{T}(P)$.

\begin{defin}
Consider an arbitrary face $F$ of codimension $k$ of the tiling $\mathcal{T}(P).$ Then the convex hull of centers of all tiles sharing $F$ forms a {\it dual $k$-cell} $\mathcal{D}_F$ corresponding to $F$. We will refer to $k$ as (combinatorial) dimension of $k$-cell. For example, the dual cell corresponding to a $d$-dimensional polytope $P'\in \mathcal{T}(P)$ is a single point --- its center.
\end{defin}

The set of all dual cells of the tiling $\mathcal{T}(P)$ determines a structure of combinatorial cell complex in $\R^d$. So, we can introduce straightforward definition of combinatorial equivalence of dual cells or dual cell and polytope.

\begin{defin}
$\mathcal{D}_F$ is said to be {\it combinatorially equivalent} to a polytope in case the polytope has identical face lattice. For example a dual $2$-cell $\mathcal{D}_F$ is equivalent either to a triangle for primitive $F$ or to a parallelogram for a non-primitive $F$ (see figure \ref{dual2cells}).
\end{defin}

\begin{figure}[!ht]
\begin{center}
\includegraphics[scale=0.9]{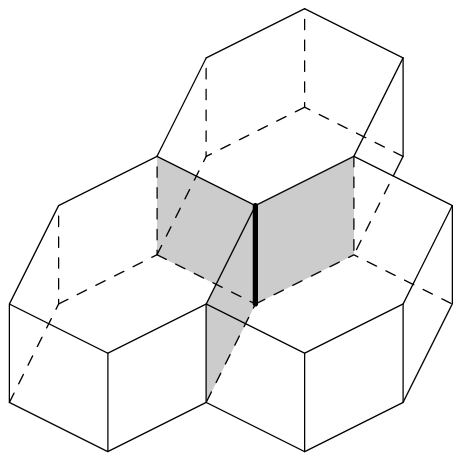}
\hskip 3cm
\includegraphics[scale=0.9]{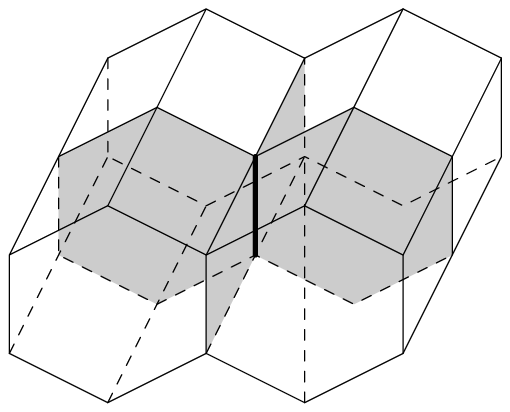}
\end{center}
\caption{Local structure of primitive and non-primitive $(d-2)$-faces.}
\label{dual2cells}
\end{figure}

Delone proved \cite{Del} (see also \cite{Ord}) that there are five possible combinatorial type of dual 3-cells. Using that result Delone proved the Voronoi conjecture \ref{vorcon} for four-dimensional parallelohedra and gave the list of 51 of them \cite{Del}. The last one missing 52nd parallelohedron was added by Shtogrin \cite{Sto}.

\begin{thm}[B.~Delone, \cite{Del}]\label{thmdel}
Every dual $3$-cell is combintorially equivalent to one of the five $3$-dimensional polytopes: cube, triangular prism, tetrahedron, octahedron, or quadrangular pyramid.
\end{thm}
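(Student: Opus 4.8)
The plan is to analyze the local combinatorial structure of the tiling $\mathcal{T}(P)$ in a neighborhood of the codimension-$3$ face $F$. Quotienting $\R^d$ by the affine hull of $F$ reduces the picture to a complete fan in $\R^3$: the tiles containing $F$ become the three-dimensional chambers, the facets containing $F$ become the two-dimensional walls, and the codimension-$2$ faces containing $F$ become the rays. The dual $3$-cell $\mathcal{D}_F$ is combinatorially dual to this fan, so its vertices correspond to chambers, its edges to walls, and its $2$-faces to rays; in particular its boundary is a $2$-sphere subdivided into the $2$-faces of $\mathcal{D}_F$. I would begin by recording that each such $2$-face is itself a dual $2$-cell $\mathcal{D}_G$ for the corresponding codimension-$2$ face $G\supset F$.

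By the classification of dual $2$-cells stated above, every $2$-face of $\mathcal{D}_F$ is therefore either a triangle (when $G$ is primitive, i.e.\ generates a $6$-belt) or a parallelogram (when $G$ generates a $4$-belt). So the first reduction is that $\mathcal{D}_F$ is a $3$-polytope all of whose faces are triangles or parallelograms. I would then combine this with Euler's relation $V-E+(f_3+f_4)=2$ for $\mathcal{D}_F$, where $V$ and $E$ are its numbers of vertices and edges and $f_3,f_4$ count its triangular and quadrilateral faces, together with the incidence identity $2E=3f_3+4f_4$, to obtain the first numerical constraints on the possible face vectors.

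The main work, however, is that this face-type constraint alone is \emph{not} enough: the rhombic dodecahedron, for instance, has only rhombic faces and satisfies Euler's relation and the incidence identity, yet cannot occur as a dual cell. To exclude such impostors I would bring in the belt structure together with the central symmetry of $P$. The edges of $\mathcal{D}_F$ correspond to the facets of the tiling through $F$, and opposite (parallel) facets of a belt correspond to parallel edges; tracking these parallel classes organizes $\partial\mathcal{D}_F$ into zones, and Minkowski's restriction that every belt has $4$ or $6$ facets constrains how triangular and parallelogram $2$-cells are permitted to share an edge and close up consistently around $F$. Feeding these adjacency and zone constraints back into the Euler count should leave precisely the cube, triangular prism, tetrahedron, octahedron, and quadrangular pyramid.

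I expect the crux to be exactly this last step, namely proving \emph{completeness}: establishing the face-type restriction and running Euler's formula are routine, but one must show that the belt-consistency conditions genuinely rule out every other $3$-polytope with triangular and parallelogram faces — the rhombic dodecahedron, the pentagonal bipyramids, and the infinite zonotopal families — all of which survive the purely numerical constraints. The delicate part is formulating the gluing condition on the boundary sphere precisely enough that the resulting case analysis is simultaneously finite and exhaustive.
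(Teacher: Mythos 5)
Your proposal is a plan, not a proof, and the gap sits exactly where you yourself locate it. (For calibration: the paper contains no proof of this theorem at all --- it is imported from Delone's 1929 paper, with Ordine's thesis as a secondary source --- so your attempt has to stand entirely on its own.) The setup is sound: reducing to the quotient fan at $F$, identifying the $2$-faces of $\mathcal{D}_F$ with the dual $2$-cells $\mathcal{D}_G$ of codimension-$2$ faces $G\supset F$, invoking Minkowski's belt theorem to conclude every $2$-face is a triangle or a parallelogram, and writing the Euler and incidence relations. You also correctly diagnose that this is insufficient (the rhombic dodecahedron passes all of it). But from that point on nothing is actually proved: the ``belt-consistency'' and ``zone'' conditions that are supposed to exclude all other polytopes are never formulated, no single candidate polytope is actually eliminated, and no case analysis is carried out. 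The statement that the delicate part is ``formulating the gluing condition\dots precisely enough that the resulting case analysis is simultaneously finite and exhaustive'' is an admission that the mathematical content of the theorem --- completeness of the list --- is missing.

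Concretely, the ingredient your outline lacks is arithmetic rather than fan-theoretic, and without it the approach cannot close. Purely fan-theoretic constraints do not suffice: for instance, the face fan of a pentagonal prism is a genuine complete fan in $\R^3$ with exactly $3$ chambers around every ray, so the pentagonal bipyramid ($7$ vertices, all faces triangles) survives everything you wrote down. What rules it out is the lattice structure: the tiling is $P+\Lambda$ with $P$ centrally symmetric, and if two tile centers $t_1,t_2$ of tiles sharing a face were congruent modulo $2\Lambda$, the point reflection in the lattice point $(t_1+t_2)/2$ would fix the face $(P+t_1)\cap(P+t_2)$ and place a lattice point on a proper face of the tiling --- impossible. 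Hence the vertices of any dual cell are pairwise incongruent modulo $2\Lambda$, which gives at most $2^3=8$ vertices for a dual $3$-cell, at most $4$ of its vertices in any affine plane, and at most $2$ on any line. These bounds are what simultaneously kill the rhombic dodecahedron ($14$ vertices), the icosahedron ($12$), the pentagonal bipyramid ($5$ coplanar equatorial vertices versus only $4$ residue classes in a plane), and every infinite family, and they are what make the remaining case analysis finite. Your zone/parallel-class bookkeeping supplies none of this. A smaller but real secondary gap: you use, without justification, that the $2$-faces of the convex hull $\mathcal{D}_F$ are precisely the dual $2$-cells $\mathcal{D}_G$ --- the polytopal-complex property of dual cells that the paper asserts but does not prove; your argument depends on it essentially.
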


\begin{thm}[A.~Ordine, \cite{Ord}]\label{ordthm}
If all dual $3$-cells of the tiling $\mathcal{T}(P)$ are not combinatorially equivalent to cube or triangular prism then $P$ satisfies the Voronoi conjecture.
\end{thm}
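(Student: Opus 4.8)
The plan is to reduce the Voronoi conjecture for $P$ to the existence of a \emph{canonical scaling} on the tiling $\mathcal{T}(P)$ and then to build such a scaling by a local-to-global argument whose only obstructions sit at the dual $3$-cells. Recall that a canonical scaling is a positive function $s$ on the facets of $\mathcal{T}(P)$ such that the scaled facet normals $s(F)\,\mathbf{n}_F$ are the gradient jumps of a single convex function on $\R^d$ that is affine on each tile; equivalently, lifting every tile center to an appropriate height realises $\mathcal{T}(P)$ as the projection of the lower envelope of a paraboloid. It is classical, going back to Voronoi, that $P$ satisfies Conjecture \ref{vorcon} if and only if $\mathcal{T}(P)$ admits a canonical scaling, so the entire problem becomes the construction of $s$ under the stated hypothesis.

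First I would record the local compatibility conditions. Since the lift is automatically affine inside each tile, the constraints on $s$ are supported on the faces of codimension $2$ and $3$. At a codimension-$2$ face $G$ the belt $\mathcal{B}_G$, which by Minkowski's conditions is a $4$- or $6$-belt, imposes a single linear relation among the values of $s$ on the facets incident to $G$; this relation is always solvable and, more importantly, it lets one \emph{propagate} the value of $s$ from one incident facet to the others. Thus $s$ is pinned down by its value on a single facet together with a choice of propagation path, and the obstruction to $s$ being well defined is the monodromy of this propagation around closed loops in the dual cell complex.

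The local-to-global step is topological. I would work in the complex dual to $\mathcal{T}(P)$, in which tiles are vertices, facets are edges, codimension-$2$ faces are dual $2$-cells, and codimension-$3$ faces are dual $3$-cells, exactly as in the definition of $\mathcal{D}_F$ above. Because $\R^d$ is contractible and the tiling is face-to-face, every loop in the edge graph is a product of the boundaries of dual $2$-cells, and every relation among these elementary loops is generated by the boundaries of dual $3$-cells. Consequently the propagation of $s$ is globally consistent as soon as its monodromy around the boundary of \emph{each} dual $3$-cell is trivial, and by Theorem \ref{thmdel} there are only five combinatorial types to examine.

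The remaining, and main, step is this case analysis, and it is where the hypothesis enters. For each type I would assemble the finite linear system coupling the belt relations of the codimension-$2$ faces on the boundary of the dual $3$-cell and compute the monodromy of the induced propagation. For the tetrahedron, the octahedron and the quadrangular pyramid one checks that the system has a solution unique up to a global scalar and that the monodromy around the cell is forced to be trivial, so these cells never obstruct; positivity is preserved because the belt relations respect the sign of a nonzero value. For the cube and the triangular prism, by contrast, the system is underdetermined and the monodromy can be a nontrivial scalar, so these cells may obstruct the global existence of $s$. Under the assumption that no dual $3$-cell of $\mathcal{T}(P)$ is combinatorially a cube or a triangular prism, every monodromy is trivial, the propagation assembles into a single positive function $s$, and $P$ satisfies the Voronoi conjecture. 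I expect the hardest part to be the explicit monodromy computation for the cube and prism together with the verification that the three benign types really do force triviality, since this demands careful bookkeeping of the belt relations around each cell rather than one clean identity.
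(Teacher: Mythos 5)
The paper does not actually prove this statement: Theorem \ref{ordthm} is quoted from Ordine's Ph.D. thesis \cite{Ord}, so there is no internal proof to compare against, and your proposal has to be judged on its own. It contains a genuine gap, and the gap sits exactly at the two places you flagged as the ``main steps'': the local case analysis and the local-to-global principle. Write the constraint at a codimension-$2$ face $G$ explicitly. If $G$ is primitive (dual cell a triangle), the three incident facets have pairwise non-parallel normals spanning a $2$-plane, so $\pm s(F_1)\mathbf{n}_{F_1}\pm s(F_2)\mathbf{n}_{F_2}\pm s(F_3)\mathbf{n}_{F_3}=0$ fixes all three values up to one common scalar. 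If $G$ is non-primitive (dual cell a parallelogram), the four normals come in two parallel pairs, and the relation decouples into $s(F_1)=s(F_3)$ and $s(F_2)=s(F_4)$, with \emph{no} coupling between the two pairs (so already your claim that the belt relation propagates $s$ from one incident facet ``to the others'' fails for $4$-belts). Now run your case analysis with this in hand: for the cube every $2$-face of the dual cell is a parallelogram, so every local coupling has gain $1$ and the local system is trivially consistent (its solution space is $3$-dimensional, one scalar per parallel class); for the triangular prism the two triangle faces impose the \emph{same} ratios, because corresponding facets of the two triangles are parallel across the quadrilateral faces and hence have equal normals, and the parallelogram couplings merely identify corresponding values --- again consistent, with a $2$-dimensional solution space. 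So the monodromy around \emph{every} dual $3$-cell, cubes and prisms included, is always trivial. What distinguishes cubes and prisms is not a nontrivial local monodromy but the fact that their local gain graph is \emph{disconnected}: they fail to propagate the scaling, they do not locally obstruct it.

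This kills the proposed architecture: if your local-to-global principle (``trivial monodromy around each dual $3$-cell implies global consistency'') were correct, then combined with the computation above it would prove the Voronoi conjecture for all parallelohedra, which is far beyond what Ordine established. The flaw in its justification is that the scaling $s$ lives on the \emph{edges} of the dual complex (facets), and propagation happens inside dual $2$-cells; the monodromy is therefore a function on cycles of a derived ``facet adjacency'' gain graph, not on loops in the edge graph of the dual complex. Contractibility of $\R^d$ controls $\pi_1$ and $H_2$ of the dual complex, but says nothing about the cycle space of that derived graph once parallelogram cells (which couple only opposite facets) are present. The true obstruction is global: it is a gain homomorphism on the first homology of the complement of the non-primitive codimension-$2$ faces --- precisely the $\delta$- and $\pi$-surfaces of this paper --- which is why Theorem \ref{homologies} imposes a homological hypothesis rather than a local one. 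Consistently with this, the paper notes that the relation between Ordine's theorem and the condition of Theorem \ref{homologies} is unclear, and Example \ref{piexample} exhibits a parallelohedron covered by the homological condition but not by Ordine's theorem; whatever Ordine's argument is, it must use the exclusion of cubes and prisms in a structural, global way, not through monodromy checks around single dual $3$-cells.
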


Also there is a recent result by Garber, Gavrilyuk and Magazinov \cite[Thm. 4.6]{GGM} that has a direct connection with this paper.

\begin{defin}\label{deltasur}
Consider the surface $\partial P$ of a $d$-dimensional parallelohedron $P$. After deletion of all closed non-primitive faces of codimension 2 of $P$ we  obtain a $(d-1)$-dimensional manifold without boundary. We will call this manifold the $\delta$-{\it surface} of $P$ and denote by $P_\delta$.

If we glue together every pair of opposite points of the $P_\delta$, we obtain another $(d-1)$-dimensional manifold that is a subset of real projective space $\mathbb{RP}^{d-1}$. We call this manifold the {\it $\pi$-surface} of $P$ and denote by $P_\pi$.
\end{defin}

\begin{defin}
A path on the $\delta$-surface of a parallelohedron $P$ is called {\it half-belt cycle} if it starts in the center of one facet $F$, ends in the center of the opposite facet $F'$ and passes only two other facets from one belt that contains $F$ and $F'$. It is clear that such a path represents a cycle on the $\pi$-surface of $P$.
\end{defin}

\begin{thm}[A.~Garber, A.~Gavrilyuk, and A.~Magazinov, \cite{GGM}]\label{homologies}
If the group of one-dimensional homologies $H_1(P_\pi,\Q)$ is generated by half-belts cycles of $P$ then the Voronoi conjecture $\ref{vorcon}$ is true for $P$.
\end{thm}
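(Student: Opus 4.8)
The plan is to deduce the Voronoi conjecture for $P$ from the classical criterion of Voronoi, in the sharpened form due to McMullen, that $P$ is affinely a Dirichlet--Voronoi domain if and only if the tiling $\mathcal{T}(P)$ admits a \emph{canonical scaling}. Recall that such a scaling is an assignment of a positive number $n_F$ to every facet $F$ of $\mathcal{T}(P)$, invariant under translations of the tiling and forced to satisfy $n_F=n_{-F}$, with the property that at each primitive codimension-$2$ face the outward normals $\mathbf{n}_1,\mathbf{n}_2,\mathbf{n}_3$ of the three incident facets obey the weighted closing relation $n_1\mathbf{n}_1+n_2\mathbf{n}_2+n_3\mathbf{n}_3=0$. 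Since these three normals span the plane transverse to the face, the relation pins down the ratios $n_1:n_2:n_3$, and they are positive by Minkowski's hexagon condition; non-primitive ($4$-belt) faces impose no genuine constraint. Thus producing a canonical scaling amounts to solving this finite system of local relations in positive unknowns, and the task reduces to showing that a positive solution exists under the hypothesis on $H_1(P_\pi,\Q)$.

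First I would observe that the constraint $n_F=n_{-F}$, together with the deletion of the constraint-free non-primitive faces, makes the $\pi$-surface $P_\pi$ the natural domain of the problem: its facet-regions carry the unknowns $n_F$ with opposite facets already identified, while its remaining codimension-one gluing loci are exactly the primitive codimension-$2$ faces that carry the relations. The local relations can then be read as a flat structure on $P_\pi$: transporting a tentative scaling across primitive faces along a path, and returning to the start, multiplies it by a well-defined positive factor, the \emph{gain} of the loop, depending only on its homology class. Taking logarithms and extending $\Q$-linearly produces a homomorphism $g\colon H_1(P_\pi,\Q)\to\R$, and the system admits a consistent global solution precisely when $g\equiv 0$. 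Establishing that the pointwise relations assemble into a genuinely closed cochain, so that $g$ is well defined on homology, is the technical backbone of this step.

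The crucial point is then that $g$ vanishes on every half-belt cycle. Such a cycle is confined to a single belt, which must be a $6$-belt since the ridges of $4$-belts are deleted from $P_\delta$; on $P_\pi$ it runs from the center of a facet $F$, across two further facets of that belt, to the center of the opposite facet $F'$, which is identified with $F$. Intuitively a single belt, read in isolation, imposes mutually consistent constraints, so a loop confined to it accumulates no net gain; rigorously this is a local computation with the relations at the primitive faces the cycle meets, using the hexagonal geometry of the belt and the fact that on $P_\pi$ the endpoint is the start. Hence $g$ annihilates all half-belt cycles. By hypothesis these cycles generate $H_1(P_\pi,\Q)$, and since $\R$ is a $\Q$-vector space a homomorphism vanishing on a generating family is identically zero; thus $g\equiv 0$, a positive canonical scaling exists, and Voronoi's criterion yields the conjecture for $P$.

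The main obstacle, I expect, lies not in any single estimate but in the faithful dictionary between the linear algebra of canonical scalings and the topology of $P_\pi$. The delicate step is to verify that the local relations at the primitive codimension-$2$ faces assemble into a well-defined closed cochain, so that the gain truly descends to a homomorphism on $H_1(P_\pi,\Q)$ and is insensitive to how a loop is deformed across non-primitive faces or around strata of higher codimension. Once that is in place, positivity causes no trouble, since the gain is a product of the positive in-belt ratios, and the reason rational coefficients suffice --- that a $\Q$-linear functional killing a generating family must vanish --- is exactly what the hypothesis is designed to exploit.
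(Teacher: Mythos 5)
This statement is not proved in the paper at all: it is imported verbatim from \cite{GGM} (Theorem 4.6 there), so there is no ``paper's own proof'' to compare against. Measured against the argument actually given in \cite{GGM}, your proposal reconstructs the same route: the reduction of the Voronoi conjecture to the existence of a canonical scaling of the tiling, the observation that translation invariance plus $n_F=n_{-F}$ makes $P_\pi$ the natural domain of the unknowns while non-primitive (4-belt) ridges impose no extra condition, the multiplicative gain of a loop giving (after taking logarithms) a homomorphism $g\colon H_1(P_\pi,\Q)\to\R$ whose vanishing is equivalent to solvability, the local computation that a loop confined to a single 6-belt has trivial gain, and the divisibility of $\R$ as a $\Q$-vector space to pass from a generating family to all of $H_1$. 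All of these steps are correct and are exactly the skeleton of the proof in \cite{GGM}.

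The one place where your write-up stops short of a proof is the point you yourself flag as the ``technical backbone'': the claim that the local relations form a closed cochain, i.e.\ that the gain of a contractible loop is trivial. On $P_\delta$ the primitive ridges are codimension~1 in the surface, so homotopies of loops sweep across the codimension-2 strata of the surface, which are the $(d-3)$-faces of $P$ surviving the deletion; well-definedness of $g$ on homology is precisely the statement that the product of scaling ratios around each such face equals~1. This is not a formality: in \cite{GGM} it is established by analyzing the local structure at $(d-3)$-faces via Delone's classification of dual $3$-cells (Theorem \ref{thmdel} of the present paper) --- the surviving faces are exactly those whose dual cell is a tetrahedron or an octahedron, and consistency must be verified for each type. (This is also the point of contact with the ``trivially contractible cycles'' in the set $C_P$ of Section~3 of the paper.) So your proposal is the right proof in outline and coincides with the cited source's strategy, but a complete argument would have to fill in this local consistency lemma rather than defer it.
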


This results covers theorem of Voronoi \ref{vorthm} and Zhitomirski \ref{zhitthm} but for now it is unclear its connection with Ordine's theorem \ref{ordthm}. In the same paper \cite[Sect. 5]{GGM} it is shown that all three-dimensional parallelohedra satisfies theorem \ref{homologies}.

In this paper we will show that all four-dimensional parallelohedra satisfies this theorem, i.e. we will show that group of one-dimensional rational homologies of $\pi$-surface of an arbitrary four-dimensional parallelohedron is generated by its half-belt cycles. We need to remark that this will NOT give us a new proof of the Voronoi conjecture for four-dimensional case since we will use (see Section \ref{sec:class}) Delone classification \cite{Del} of four-dimensional parallelohedra which is based on his proof of the Voronoi conjecture for four-dimensional case.

\section{Classification of four-fimensional parallelohedra via Delone triangulations}\label{sec:class}

The first classification of four-dimensional parallelohedra appeared in the work of Delone \cite{Del}. Delone first proved the Voronoi conjecture for that class of polytopes and then used Schlegel diagrams \cite[Lect. 5]{Zieg} to give almost the full list later completed by Stogrin.

In this paper we will use another way of constructing the full list of four-dimensional parallelohedra using Voronoi decomposition of the cone of positive (semi)definite quadratic forms. This way also uses the fact that any four-dimensional parallelohedron satisfies the Voronoi conjecture and this way is described in details in \cite[Sect. 2--4]{Val}.

The set of all $d$-dimensional positive semidefinite quadratic forms is the convex cone that we will denote $\mathcal{C}_d$.

\begin{defin}
Fix a lattice $\Lambda\subset \R^d$. For an arbitrary positive definite $Q:\R^d\longrightarrow \R$ we can define the {\it Delone tiling $D_\Lambda(Q)$ of $\Lambda$ with respect to $Q$} by taking all lattice polytopes that are inscribed in ``empty'' ellipsoids with respect to $Q$. In other words, a $d$-dimensional polytope $P$ with vertex set $V_P\subset \Lambda$ is in $D_\Lambda(Q)$ iff there is a point $\mathbf c$ such that for any $\ib v\in V_P$ we have $Q(\ib v-\ib c)=\min_{\ib x\in \Lambda}Q(\ib x-\ib c)$ and no other lattice points attains this minimum.

We can generalize this definition to the case of semidefinite non-negative quadratic form and in that case we will obtain unbounded Delone tiles. We will consider only the case $\Lambda=\Z^d$ since any other lattice and Delone tiling can be transformed in this case with suitable affine transformation.
\end{defin}

Delone tilings are dual to Voronoi tilings of lattice with respect to different quadratic forms and the only difference from the definition \ref{vortiling} of the Voronoi domain $V_\Lambda(Q)$ with respect to quadratic form $Q$ is that we will consider metrics $Q(\ib x)$ instead of ``usual'' metrics $\rho(\ib x)=x_1^2+\ldots+x_d^2$. For example Delone tiling is a triangulation iff corespondent Voronoi polytope is primitive.

\begin{defin}
Let $\mathcal{D}$ be a Delone tiling of lattice $\Z^d$. The set $\triangle(\mathcal{D})$ of all positive quadratic forms  with the same Delone tiling is called {\it secondary cone} of the tiling $\mathcal D$. If $\mathcal{D}$ is the Delone tiling with respect to the form $Q$ then $\triangle(\mathcal{D})$ is also called the {\it $L$-type domain} of $Q$. It is a convex subcone of $\mathcal C_d$. 

Full-dimensional secondary cones correspondent to Delone triangulations of the lattice $\Z^d$ since if $Q$ determines a Delone triangulation then any sufficiently small perturbation of $Q$ does not change the correspondent triangulation.
\end{defin}

\begin{thm}[G.~Voronoi, \cite{Vor}]
The closures $\bigl\{\overline{\triangle(\mathcal{D})}\bigr\}$ of all secondary cones for Delone triangulations forms a face-to-face decomposition $\mathfrak{C}_d$ of the cone of positive semidefinite quadratic forms.
\end{thm}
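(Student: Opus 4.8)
The plan is to realize each secondary cone as a polyhedral cone cut out by finitely many explicit linear inequalities, to catalogue its faces by the refinement relation on Delone subdivisions, and then to read off the covering and face-to-face properties from this catalogue. For the first step I would fix a Delone triangulation $\mathcal{D}$ and linearize the Delone condition by the paraboloid lifting: sending each lattice point $\ib v\in\Z^d$ to $(\ib v,Q(\ib v))\in\R^{d+1}$, the subdivision $D(Q)$ is exactly the projection of the lower faces of the convex hull of the lifted points. Since the height $Q(\ib v)$ is linear in the coefficients of $Q$, the requirement that the lift of a lattice point lie on or above the affine hyperplane spanned by the lifts of a $d$-simplex of $\mathcal{D}$ is a linear inequality in $Q\in\R^{\binom{d+1}{2}}$. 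It suffices to impose, for each $(d-1)$-simplex shared by two $d$-simplices of $\mathcal{D}$, the single inequality governing local convexity (absence of a flip) across that facet, since local convexity across every such ridge forces convexity of the whole lower hull. As $\mathcal{D}$ is $\Z^d$-periodic there are finitely many ridges up to translation, so $\overline{\triangle(\mathcal{D})}$ is the polyhedral cone defined by these finitely many non-strict inequalities, and $\triangle(\mathcal{D})$ is its relative interior obtained by making them strict; for a genuine triangulation this interior is nonempty, hence full-dimensional as already observed.

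The heart of the argument is a dictionary for the boundary: I would prove that $Q\in\overline{\triangle(\mathcal{D})}$ if and only if the Delone subdivision $D(Q)$ is a coarsening of $\mathcal{D}$, and that turning a subset of the defining inequalities into equalities merges the corresponding pairs of simplices into non-simplicial Delone cells, yielding exactly such a coarsening. This produces the stratification $\overline{\triangle(\mathcal{D})}=\bigsqcup_{D}\triangle(D)$, the union running over all Delone subdivisions $D$ refined by $\mathcal{D}$, with each $\triangle(D)$ the relative interior of a face of $\overline{\triangle(\mathcal{D})}$ and $\overline{\triangle(D)}$ the corresponding closed face. The nontrivial direction is that a limit of forms defining $\mathcal{D}$ cannot acquire a subdivision incomparable to $\mathcal{D}$, which is a continuity statement about lower hulls of the lifted points.

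Granting the dictionary, the two remaining properties follow. For the covering, any positive definite $Q$ lies in $\overline{\triangle(\mathcal{D})}$ for every triangulation $\mathcal{D}$ refining $D(Q)$ (obtained by a small generic perturbation of $Q$); the union of the closed cones is locally finite, hence closed, and it contains the dense positive definite part, so it exhausts $\mathcal{C}_d$. For the face-to-face property, let $F=\overline{\triangle(\mathcal{D}_1)}\cap\overline{\triangle(\mathcal{D}_2)}$ for two triangulations, a convex polyhedral cone, and choose $Q$ in the relative interior of $F$ with $D_0:=D(Q)$. By the dictionary both $\mathcal{D}_1$ and $\mathcal{D}_2$ refine $D_0$, so $\overline{\triangle(D_0)}$ is a common face of both closures and is contained in $F$; conversely, since $Q$ is relatively interior to $F$ and a face of a cone absorbs every open segment through one of its points, $F$ is contained in the minimal face of each closure through $Q$, which the dictionary identifies as $\overline{\triangle(D_0)}$ in both cases. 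Hence $F=\overline{\triangle(D_0)}$ is a common face, and the decomposition is face-to-face.

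I expect the main obstacle to be the boundary dictionary of the second paragraph: showing that passing a lifting inequality to equality corresponds precisely to a coarsening of the Delone subdivision, that no spurious forms appear on the resulting face, and that no incomparable subdivision appears in a limit. This is exactly where Voronoi's repartitioning (bistellar flip) analysis is needed, and where the rank-deficient degenerations on the boundary $\partial\mathcal{C}_d$, which produce unbounded Delone cells, have to be handled separately from the positive definite interior.
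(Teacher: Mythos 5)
The paper itself contains no proof of this statement: it is quoted as Voronoi's classical theorem with \cite{Vor} as the reference (see also \cite[Sect.~2]{Val}), so your proposal can only be measured against the standard proof in the literature. Your outline does follow that standard route: the lifting $\ib v\mapsto(\ib v,Q(\ib v))$, whose lower hull projects to the Delone subdivision and whose heights are linear in the coefficients of $Q$; the description of $\overline{\triangle(\mathcal{D})}$ by one ``no-flip'' inequality per translation class of ridges; the dictionary identifying the faces of $\overline{\triangle(\mathcal{D})}$ with the coarsenings of $\mathcal{D}$; and the convexity argument deducing the face-to-face property from that dictionary. Those parts are correct and constitute exactly Voronoi's $L$-type theory as presented in Vallentin's thesis.

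The genuine gap is in your covering argument, and it is not a removable technicality. The family of closed secondary cones is \emph{not} locally finite near the boundary of $\mathcal{C}_d$, and its union is \emph{not} closed, so ``locally finite, hence closed, and contains the dense positive definite part, hence everything'' fails. Concretely, each $\overline{\triangle(\mathcal{D})}$ is a rational polyhedral cone (it is cut out by rational linear inequalities), so every form in it is a nonnegative combination of rational positive semidefinite forms; for semidefinite forms the kernel equals the zero set, so the kernel of any such combination is an intersection of rational subspaces, hence rational. Therefore a degenerate form with irrational kernel, for instance $(x_1+\alpha x_2)^2$ with $\alpha\notin\Q$, lies in no closed secondary cone at all, even though it is a limit of positive definite forms; infinitely many secondary cones accumulate at such a point. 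Thus the covering claim, read literally for all of $\mathcal{C}_d$, is false, and what Voronoi's theorem actually asserts --- and what your dictionary-based argument does prove once the rank-deficient case with rational kernel is handled --- is that the closed secondary cones tile the \emph{rational closure} of $\mathcal{C}_d$: the positive definite forms together with those semidefinite forms whose kernel is spanned by lattice vectors. (Even on that set, local finiteness fails at boundary points --- for example infinitely many cones, related by the shears stabilizing the kernel, contain the form $x_1^2$ --- so the covering must be argued pointwise via the dictionary and a perturbation to a refining triangulation, never via a closedness argument.) The paper's phrasing over the whole semidefinite cone is the customary abuse of language; a correct write-up must restrict to the rational closure or to the positive definite interior.
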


It is clear that two proportional quadratic forms determines the same Delone tiling. So the decomposition $\mathfrak{C}_d$ of the cone $\mathcal{C}_d$ will not have vertices and the smallest possible face will have dimension $1$.

\begin{defin}
Forms that represents faces of the dimension $1$ of the tiling $\mathfrak{C}_d$ are called {\it rigid forms}. Any small perturbation of rigid form (except multiplying by positive real) changes correspondent Delone tiling.
\end{defin}

The following theorem is the fundamental theorem about how Delone tiling $D_\Lambda(Q)$ and Dirichlet-Voronoi polytope $V_\Lambda(Q)$ changes when we travel on the cone $\mathcal{C}_d$. It is proved in Ph.D. thesis of F.~Vallentin \cite[Prop. 2.6.1 and Prop. 3.3.5]{Val}, though its second part was announced in \cite{Rysh} and proved in \cite{BolR}.

\begin{thm}\label{onecone}
If all quadratic forms $Q_1,\ldots, Q_n$ contained in the closure of one secondary cone $\overline{\triangle(\mathcal{D})}$ then for arbitrary positive real numbers $\lambda_1,\ldots, \lambda_n$ the form $Q=\lambda_1Q_1+\ldots+\lambda_nQ_n$ defines:
\begin{enumerate}
\item the Delone tiling $D(Q)$ which is a common refinement of tilings $D(Q_1), \ldots, D(Q_n)$.
\item the Dirichlet-Voronoi polytope $V(Q)$ which is the Minkowski sum $\lambda_1V(Q_1)+\ldots+\lambda_nV(Q_n)$. 
\end{enumerate}
\end{thm}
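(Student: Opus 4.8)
My plan is to reduce both assertions to the description of Delone cells as lower faces of a lifting, and then to exploit the fact that this lifting depends linearly on $Q$. First I would record the basic criterion behind the empty-ellipsoid definition of $D(Q)$: a lattice polytope with vertex set $S\subset\Z^d$ is a cell of $D(Q)$ if and only if there is an affine function $\ell\colon\R^d\to\R$ with $Q(\ib v)=\ell(\ib v)$ for every $\ib v\in S$ and $Q(\ib w)\ge\ell(\ib w)$ for every $\ib w\in\Z^d$. This is just the ``empty ellipsoid'' condition rewritten through the lift $\ib x\mapsto(\ib x,Q(\ib x))$, so that $D(Q)$ is the regular (lower-hull) subdivision of $\Z^d$ induced by the heights $Q(\cdot)$. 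The point of recording it first is that it turns the geometry into a statement about supporting affine functions that is visibly additive in $Q$.

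For part (1) I would take for each $i$ a full-dimensional cell $C_i$ of $D(Q_i)$ together with its supporting function $\ell_i$ and set $\ell=\sum_i\lambda_i\ell_i$. Then $Q-\ell=\sum_i\lambda_i(Q_i-\ell_i)$ is a nonnegative combination of nonnegative functions on $\Z^d$, so it vanishes at a lattice point exactly when that point lies in every $C_i$; hence $\ell$ supports precisely the cell $\bigcap_iC_i$, which is therefore a cell of $D(Q)$. Letting the $C_i$ range over all cells shows that every cell of the common refinement $\bigwedge_iD(Q_i)$ is a cell of $D(Q)$, and since those cells already cover $\R^d$ this forces $D(Q)=\bigwedge_iD(Q_i)$. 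The hypothesis that all $Q_i$ lie in the closure of \emph{one} secondary cone enters exactly here: it guarantees that each $D(Q_i)$ is a coarsening of the common tiling $\mathcal D$, so that the intersections $\bigcap_iC_i$ are again lattice polytopes without interior lattice points, i.e. honest Delone cells, which is what makes the supporting-function argument legitimate.

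For part (2) I would not work with the metric cell directly but with its realization through support numbers. Each facet of $V(Q)$ comes from a Voronoi-relevant vector $\ib v$ (a Delone edge at the origin), and after applying the linear map $A_Q$ associated to $Q$ the cell becomes $\{\ib y:\langle\ib v,\ib y\rangle\le\tfrac12 Q(\ib v)\ \text{for all }\ib v\in\Z^d\}$, whose facet normals are the \emph{fixed} integer vectors $\ib v$ and whose support numbers $\tfrac12 Q(\ib v)$ are \emph{linear} in $Q$. By part (1) all these polytopes carry the common normal fan dual to the Delone star of the common refinement, so their Minkowski sum is computed by adding support numbers facet by facet; since $\tfrac12 Q(\ib v)=\sum_i\lambda_i\cdot\tfrac12 Q_i(\ib v)$, the identity follows. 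One checkpoint is needed: a vector $\ib v$ that is relevant for $Q$ joins two vertices of a single cell of each $D(Q_i)$ as well, so the support number $\tfrac12 Q_i(\ib v)$ is genuinely attained (the cells $V(Q_i)$ and $V(Q_i)+\ib v$ meet at the shared circumcenter) and the facet-wise addition is valid. This realization also explains the quantifier ``arbitrary positive $\lambda_i$'': the metric cell is scale-invariant while $A_Q$ scales linearly, so the support realization $A_QV(Q)$ scales linearly and no normalization $\sum_i\lambda_i=1$ is required.

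I expect the main obstacle to be the structural half of part (1) --- that $D(Q)$ is \emph{exactly}, and not properly finer than, the common refinement. This is precisely what the single-secondary-cone hypothesis buys: without it the intersected cells need not be lattice polytopes and their ``common refinement'' need not be a Delone tiling at all, so the supporting-function construction could fail to produce valid cells. For part (2) the analogous delicate point is matching facets across the $V(Q_i)$, whose sets of relevant vectors differ whenever $D(Q_i)$ is a strict coarsening; the duality supplied by part (1), identifying the common refinement of Delone stars with the common refinement of normal fans, is what licenses the facet-by-facet addition.
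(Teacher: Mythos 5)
The first thing to say is that the paper contains no proof of Theorem~\ref{onecone} to compare against: it is imported wholesale from Vallentin's thesis \cite{Val} (Prop.~2.6.1 and 3.3.5) and from \cite{Rysh}, \cite{BolR}. So your attempt can only be measured against those standard proofs, and in fact your route is essentially theirs: part (1) via the description of $D(Q)$ as the regular subdivision induced by the lifting $\mathbf{x}\mapsto(\mathbf{x},Q(\mathbf{x}))$, with the closed-secondary-cone hypothesis entering through the fact that each $D(Q_i)$ coarsens $\mathcal D$, so that cells of the common refinement are again lattice polytopes; part (2) via linearity of support numbers in $Q$. Your part (1) is sound as written (for positive definite forms), and you correctly isolate where the hypothesis is used. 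You also deserve credit for noticing that the identity $V(Q)=\sum_i\lambda_iV(Q_i)$ is \emph{false} for metric Voronoi cells (they are scale-invariant) and must be read through the support realization $A_QV(Q)=\{\mathbf y:\langle \mathbf v,\mathbf y\rangle\le\tfrac12Q(\mathbf v)\}$; the paper glosses over this normalization entirely.

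Two points in part (2), however, are genuine gaps. First, the sentence ``all these polytopes carry the common normal fan'' is literally false for the summands: the normal fan of $A_{Q_i}V(Q_i)$ is the fan of cones over the cells of $D(Q_i)$ containing the origin, which is in general strictly coarser than the fan of $D(Q)$. What you actually need is that the star fan of the common refinement refines every summand's normal fan \emph{and} equals the normal fan of $A_QV(Q)$; this rests on two facts you assert but never prove, namely the Delone--Voronoi duality identifying the normal fan of $A_{Q'}V(Q')$ with the star fan of $D(Q')$ at the origin, and the identity $\R_{+}\bigl(\bigcap_i C_i\bigr)=\bigcap_i\R_{+}C_i$ for cells $C_i\ni 0$ (a short convexity argument, but it must be made). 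With these, both support functions are linear on each cone of the common fan and agree on its rays by your ``checkpoint'', which does yield equality. Second, and more seriously for this paper: your whole argument is phrased for positive definite forms, whereas the closure $\overline{\triangle(\mathcal D)}$ contains semidefinite ones, and the paper applies the theorem \emph{only} to such summands (the rank-one forms whose Voronoi summands are the segments added to the $24$-cell, and all zonotopal cases). For degenerate $Q_i$ the Delone cells are unbounded and contain lattice points that are not vertices, the metric cell is an unbounded cylinder while its support realization is lower-dimensional, and ``vertex set'', ``circumcenter'' and ``normal fan'' all need reinterpretation. The statements survive, but not verbatim; a complete proof must handle this case, since it is the only one the paper actually uses.
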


That means that any parallelohedron can be represented as Minkowski sum of Dirichlet-Voronoi polytopes correspondent to rigid forms --- {\it mainstay parallelohedra} \cite{BolR}.

In dimensions $2$ and $3$ the only rigid forms are forms of the rank $1$. In that case Delone tilings formed by families of parallel hyperplanes and called {\it dicings} \cite{Erd} and correspondent Dirichlet-Voronoi polytopes are zonotopes.

\begin{defin}
A convex polytope $Z\subset \R^d$ is called {\it zonotope} if it can be represented as a Minkowski sum of finite number of segments. Equivalently, any zonotope $Z$ is a projection of some cube of dimension $n\geq d$.
\end{defin}

In $\R^4$ there is a rigid form $Q$ that is not a form of rank $1$. This form corresponds to the lattice $D_4$ \cite{d4}, i.e. if we take an affine transformation $\mathcal{A}$ such that $Q$ will transform into Euclidean metrics then the lattice $\Z^d$ will transform into $D_4$ lattice. The Dirichlet-Voronoi polytope for the lattice $D_4$ is the regular 24-cell \cite{24cell}. 

So in $\R^4$ there are two families of parallelohedra. The first family consist of zonotopes, and the second one consist of (affine images of) Minkowski sums of regular $24$-cell with segments. We can distinguish these families by considering rigid forms that are included in correspondent summands. We are going to study the first family in the section \ref{zon} and the second family in the section \ref{non-zon}.

\section{Homologies of $\pi$-surface via Venkov graph}

\begin{defin}
For every parallelohedron $P$ we can construct its {\it Venkov graph} $G_P$ \cite{Ord}. The vertices of $G_P$ are pairs of opposite facets of $P$ and two vertices of $G_P$ are connected with blue (red) edge if they are in one 6-belt (4-belt). 

Since we are interested only in primitive faces of codimension 2 then we will need only blue edges of Venkov graph. The graph $G_P'$ with the same set of vertices and only with blue edges is called {\it primitive Venkov graph}.

As it was shown in \cite{Ord} the primitive Venkov graph is connected if and only if $P$ can not be represented as direct product of two parallelohedra of smaller dimensions.
\end{defin}

\begin{defin}
Define the set $C_P$ of {\it gain cycles} of the graph $G_P$ consist of:
\begin{itemize}
\item half-belt cycles, i.e. triangles with edges that connects pairs of facets parallel to one primitive face of codimension 2;
\item trivially contractible cycles, i.e. cycles that represents closed paths on the $\pi$-surface around one $(d-3)$-face of $P$.
\end{itemize}
\end{defin}

For example the primitive Venkov graph for elongated dodecahedron is shown on the left part of the figure \ref{vengraph}. And the right part of this figure shows examples of half-belt cycle (the blue cycle) and trivially contractible cycle (the green cycle).

\begin{figure}[!ht]
\begin{center}
\includegraphics[scale=0.5]{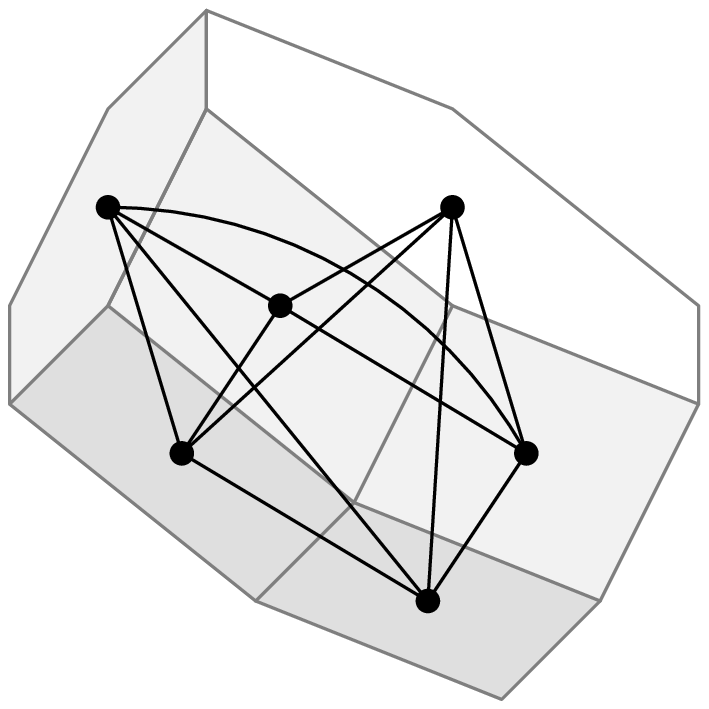}
\hskip 3cm
\includegraphics[scale=0.5]{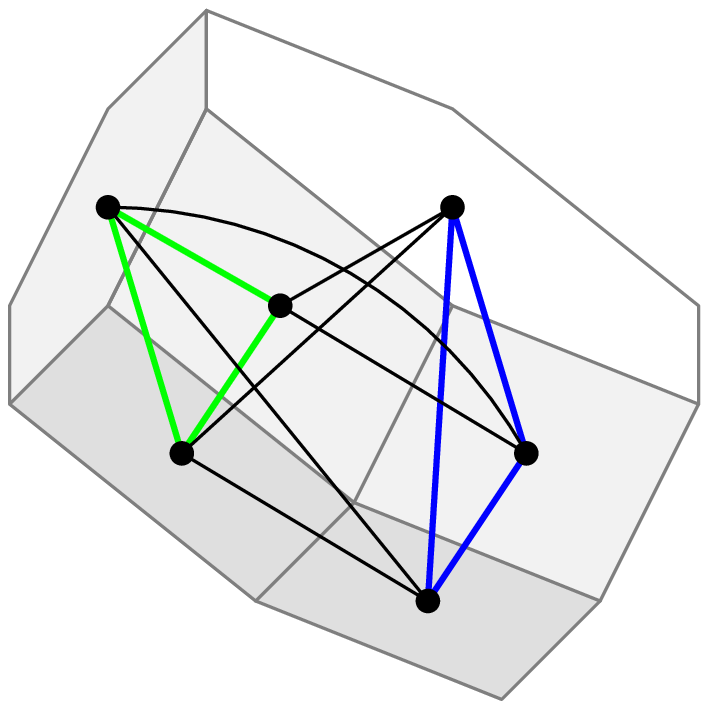}
\end{center}
\caption{The primitive Venkov graph of elongated dodecahedron and examples of gain cycles.}
\label{vengraph}
\end{figure}

\begin{thm}\label{graphcycles}
To show that theorem $\ref{homologies}$ is true for $P$ it is enough to check that Abelian group of cycles of the graph $G_P$ is generated by the set $C_P$ of gain cycles.
\end{thm}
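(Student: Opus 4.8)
The plan is to compute $H_1(P_\pi,\Q)$ by cellular homology, using the cell decomposition of $P_\pi$ dual to the face structure of $\partial P$, and then to read off the resulting chain groups directly in terms of the Venkov graph $G_P$. First I would fix the cell structure on $P_\delta$ inherited from the faces of $\partial P$ (with the closed non-primitive codimension-$2$ faces removed) and pass to its dual: to each facet of $P$ I associate a dual vertex at its center, to each primitive codimension-$2$ face a dual edge joining the centers of the two facets sharing it, and to each codimension-$3$ face a dual $2$-cell. After identifying antipodal points, a dual vertex becomes a pair of opposite facets, i.e. a vertex of $G_P$, and a dual edge becomes a pair of opposite primitive codimension-$2$ faces, i.e. a \emph{blue} edge of $G_P$. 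Thus the $1$-skeleton of the quotient dual complex is exactly the primitive Venkov graph $G_P'$. Since the non-primitive ($4$-belt) codimension-$2$ faces have been deleted, they contribute no dual edges; only blue edges appear, so for the computation of $H_1$ we may work with the blue subgraph throughout, and the cycle group relevant to $H_1$ is that carried by the blue edges.

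With this identification, cellular homology gives $H_1(P_\pi,\Q)=\ker\partial_1/\operatorname{im}\partial_2$, where $\partial_1,\partial_2$ are the boundary maps of the dual $2$-complex. Over $\Q$ the group $\ker\partial_1$ is precisely the cycle group $Z_1(G_P,\Q)$ of the graph, and I would check that the two families constituting $C_P$ are exactly the two pieces of data on the right-hand side. On one hand, a half-belt cycle is the boundary path $F\to A\to B\to F'$ through one $6$-belt; in the dual complex its three steps are the three blue edges of the triangle on the vertices $\{F,F'\},\{A,A'\},\{B,B'\}$, so half-belt cycles correspond to the half-belt triangles of $G_P$. On the other hand, I would show that $\operatorname{im}\partial_2$ is generated by the trivially contractible cycles: by construction the attaching map of the dual $2$-cell of a codimension-$3$ face $G$ is the closed path on $P_\pi$ encircling $G$, which is exactly a trivially contractible cycle. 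Hence
\[
H_1(P_\pi,\Q)=Z_1(G_P,\Q)\big/\big\langle\text{trivially contractible cycles}\big\rangle .
\]

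From this description the theorem is immediate. The images of the half-belt cycles generate $H_1(P_\pi,\Q)$ if and only if the half-belt triangles together with the trivially contractible cycles generate $Z_1(G_P,\Q)$, that is, if and only if the gain cycles $C_P$ generate the cycle group of $G_P$. In particular the combinatorial condition on $C_P$ is \emph{sufficient} for the hypothesis of Theorem \ref{homologies}, which is all that is required; working over $\Q$ loses nothing, since integral generation of the cycle group by $C_P$ a fortiori yields rational generation.

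The step I expect to be the main obstacle is the honest verification that this dual $2$-complex really computes $H_1(P_\pi)$, and in particular that each codimension-$3$ face contributes a single well-defined $2$-cell whose attaching circle is \emph{one} trivially contractible cycle. The difficulty is that the non-primitive codimension-$2$ faces have been deleted from $P_\delta$, so a priori the link of a codimension-$3$ face need not close up inside $P_\delta$; closure is recovered only after the antipodal identification. To control this I would use the local classification of dual $3$-cells (Theorem \ref{thmdel}): for each of the five combinatorial types one checks directly that, after removing the deleted faces and gluing opposite points, the surrounding dual cells assemble into exactly one disk bounded by a trivially contractible cycle. Once this local verification is in place, the global cellular computation above goes through verbatim.
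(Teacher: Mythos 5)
Your main line is essentially the paper's own proof, made explicit. The paper's argument consists of exactly two assertions: every cycle on $P_\pi$ is represented by a closed walk in the (primitive) Venkov graph, and two such walks are rationally homologous if and only if they differ by products of trivially contractible cycles --- i.e. $H_1(P_\pi,\Q)\cong Z_1(G_P',\Q)\big/\bigl\langle\text{trivially contractible cycles}\bigr\rangle$ --- together with the remark that half-belt cycles on $P_\pi$ are represented by the half-belt triangles of $G_P$. Your cellular formulation via the dual complex is a rigorous rendering of precisely this, and your deduction of the theorem from that isomorphism is correct.

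The one place where you go wrong is in the diagnosis and proposed treatment of what you call the main obstacle. The antipodal identification cannot ``recover closure'': the deleted set (the union of the closed non-primitive ridges) is centrally symmetric, so the link of a codimension-3 face in $P_\pi$ is the same as its link in $P_\delta$; nothing deleted reappears in the quotient. What actually happens is simpler. If a codimension-3 face $G$ is contained in some non-primitive ridge of $P$, then $G$ lies in the \emph{closure} of that ridge and hence is itself deleted from $P_\delta$; such a $G$ contributes no dual 2-cell and no relation, and, consistently, $C_P$ contains no trivially contractible cycle around it (a loop encircling $G$ would have to cross the deleted ridge, so no such loop exists on $P_\pi$). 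If instead every ridge of $P$ containing $G$ is primitive, then the link of $G$ is a full circle already inside $P_\delta$, and the dual 2-cell of $G$ is attached along exactly its trivially contractible cycle. So the correct bookkeeping is that 2-cells are indexed by the \emph{surviving} codimension-3 faces, which is precisely the indexing set of the trivially contractible cycles in $C_P$. In particular, your proposed verification --- that for each of Delone's five dual 3-cell types the surrounding dual cells ``assemble into exactly one disk bounded by a trivially contractible cycle'' --- fails as stated for the types having a quadrilateral 2-face at the vertex corresponding to the center of $P$ (cube, triangular prism, quadrangular pyramid): there no disk assembles, and no trivially contractible cycle exists either. The honest justification that the dual 2-complex computes $H_1(P_\pi)$ is not Theorem~\ref{thmdel} but the standard fact that the complement of a subcomplex deformation retracts onto the union of the dual blocks of the faces disjoint from it, whose 2-skeleton is exactly your complex; once this is invoked, your computation goes through and coincides with the paper's.
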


\begin{proof}
It is easy to see that every cycle on $P_\pi$ can be represented as product of cycles in $G_P$ (we can just write down sequence of facets of $P$, i.e. vertices of $G_P$) and two cycles are homologically equivalent with rational coefficients if and only if they can be transformed into each other by products on trivially contractible cycles.

So the only thing we need to add that half-belt cycles on the $\pi$-surface of $P$ are represented with half-belt cycles of $G_P$.
\end{proof}

\section{Non-zonotopal four-dimensional parallelohedra}\label{non-zon}

In \cite{Val} it was shown that there are two non-equivalent full-dimensional secondary cones in $\mathcal{C}_4$ correspondent to non-zonotopal case with respect to transformations from the group $GL_4(\Z)$ and both can be obtained from rigid form $D_4$ by adding rank one forms. So to get the full picture of this case it is enough to take the Delone tiling of the lattice $D_4$ (we will denote it as $\mathcal{D}_{24}$) and describe all rank one forms that can be added to this tiling.

\begin{exam}
The $24$-cell is the convex hull of $24$ points: all $16$ points of the form $(\pm \frac12,\pm \frac12, \pm \frac12,\pm \frac12)$ and $8$ points with three coordinates equal to $0$ and one to $\pm 1$ (i.e. permutations of $(\pm 1,0,0,0)$).

This polytope is the Dirichlet-Voronoi polytopes for the lattice 
$$D_4=\bigl\{(x_1,x_2,x_3,x_4)\in\mathbb{Z}^d: x_1+x_2+x_3+x_4 \text{ is even}\bigr\}.$$
\end{exam}

\begin{exam}
The correspondent Delone tiling $\mathcal{D}_{24}$ consist three families of regular crosspolytopes $\mathcal{F}_1,\mathcal{F}_2$ and $\mathcal{F}_3$. The family $\mathcal{F}_1$ consist of crosspolytopes with centers $(1,0,0,0)+D_4$, i.e.
$$\mathcal{F}_1=\Bigl\{\conv\{(0,0,0,0),(2,0,0,0),(1,\pm 1,0,0),(1,0,\pm 1,0),(1,0,0,\pm 1)\}+D_4\Bigr\}.$$
In the same way two other families of crosspolytopes are the following:
\begin{multline}
\mathcal{F}_2=\Bigl\{\conv\{(0,0,0,0),(1,1,1,1),(1, 1,0,0),(1,0, 1,0),(1,0,0,1), (0, 1,1,0),\\ (0,1,0,1),(0,0,1,1)\}+D_4\Bigr\},\notag\end{multline}
and
\begin{multline}
\mathcal{F}_3=\Bigl\{\conv\{(0,0,0,0),(-1,1,1,1),(-1,1,0,0),(-1,0, 1,0),(-1,0,0,1), (0, 1,1,0),\\ (0,1,0,1),(0,0,1,1)\}+D_4\Bigr\}.\notag\end{multline}
\end{exam}

Now we will describe possible Delone tilings that can appear as refinements of the tiling $\mathcal{D}_{24}$. As we described it in the end of the section \ref{sec:class} the Voronoi polytopes for non-zonotopal case is an affine image of Minkowski sum of $24$-cell with several segments. So by theorem \ref{onecone} correspondent Delone tilings can be obtained be slicing the tiling $\mathcal{D}_{24}$ with several families of hyperplanes.

We can use the classification of Frank Vallentin \cite[Sect. 4.4.5]{Val} and obtain the following:

\begin{prop}
Each (non-zonotopal) Delone tiling can be obtained from the tiling into three families of crosspolytopes $\mathcal{F}_1,\mathcal{F}_2,\mathcal{F}_3$ with slicing it with several family of parallel hyperplanes without creating new vertices. These new hyperplanes satisfies the following properties:
\begin{enumerate}
\item Each family of hyperplanes slice only one family of crosspolytopes.
\item Each family of crosspolytopes can be sliced with upto three families of hyperplanes.
\end{enumerate}
\end{prop}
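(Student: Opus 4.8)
The plan is to read the admissible refinements directly off the Minkowski-sum structure of Theorem \ref{onecone} together with the explicit coordinates of the three crosspolytope families. By Theorem \ref{onecone}, every non-zonotopal Voronoi polytope in this case is a Minkowski sum of the $24$-cell with segments, and dually the corresponding Delone tiling is the common refinement of $\mathcal{D}_{24}$ with several dicings. Each added segment comes from a rank-one form $\ell^2$, and dually refines $\mathcal{D}_{24}$ by the family of parallel hyperplanes $\{\ell=c\}$. The requirement that the refinement again be a Delone tiling of $\Z^4$ (equivalently, that we stay on a face of the $L$-type decomposition through the $D_4$ point) is exactly what forces the hyperplanes to pass through vertices of the crosspolytopes, i.e. to slice ``without creating new vertices''. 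So the first step is to check that the only such slices of a $4$-dimensional crosspolytope (a $16$-cell) are the equatorial cuts: the hyperplane through the center perpendicular to one of the four main diagonals, spanned by the endpoints of the other three diagonals, dividing the cell into two octahedral pyramids.

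Next I would make the slicing functionals explicit. For $\mathcal{F}_1$, whose crosspolytope $\conv\{(0,0,0,0),(2,0,0,0),(1,\pm1,0,0),(1,0,\pm1,0),(1,0,0,\pm1)\}$ has diagonals along the coordinate axes, the equatorial cuts are the coordinate families $\{x_i=\text{const}\}$, $i=1,2,3,4$. For $\mathcal{F}_2$, whose crosspolytope has center $(\tfrac12,\tfrac12,\tfrac12,\tfrac12)$ and diagonal directions $(1,1,1,1),(1,1,-1,-1),(1,-1,1,-1),(1,-1,-1,1)$, the cuts are the families $\{x_1\pm x_2\pm x_3\pm x_4=\text{const}\}$ with an even number of minus signs; for $\mathcal{F}_3$ they are the remaining, odd sign patterns. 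Property (1) then follows by a coordinate check: every vertex of an $\mathcal{F}_1$-cell has coordinate sum in $\{0,2\}$, the face-defining levels of an $\mathcal{F}_2$-cell, so the family $\{x_1+x_2+x_3+x_4=\text{even}\}$ that cuts $\mathcal{F}_2$-cells through their equators meets each $\mathcal{F}_1$-cell and each $\mathcal{F}_3$-cell only along a face. The symmetric computations show that every slicing family cuts exactly one of the three families in its interior.

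The heart of the argument, and the main obstacle, is property (2): the bound of three slices per family. The key observation is the linear relation $x_1^2+x_2^2+x_3^2+x_4^2=Q_{D_4}$ among the four $\mathcal{F}_1$-slicing forms, and, by the same sign-cancellation computation, $\sum(x_1\pm x_2\pm x_3\pm x_4)^2=4\,Q_{D_4}$ over each of the even- and odd-sign families for $\mathcal{F}_2$ and $\mathcal{F}_3$. Thus, modulo the $D_4$ form, the four equatorial forms of any one crosspolytope family span only a three-dimensional space, so at most three of them can be added independently to the secondary cone. Since $\dim\mathcal{C}_4=10=1+3\cdot3$, a full-dimensional non-zonotopal cone must consist of the $D_4$ form together with exactly three independent slices from each family, and the three triples must be mutually independent.

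I expect the last point, establishing that the three triples of slicing directions are genuinely independent of one another and that the resulting cones are full-dimensional, to require the most care, since it mixes the combinatorics of the no-new-vertex condition with the linear algebra of the forms. I would finish by matching this picture against Vallentin's enumeration \cite[Sect. 4.4.5]{Val} to confirm that every choice of up to three slices per family indeed occurs and that these exhaust the non-zonotopal $L$-types, which is precisely the statement of the proposition.
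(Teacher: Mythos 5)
Your setup and your treatment of property (1) track the paper's proof closely: the paper likewise reduces everything to the observation that an admissible slice must be an equatorial cut (it must pass through three pairs of opposite vertices, else new vertices appear), and then does a coordinate check on one representative family (the family $\{x_1=k\}$, invoking the symmetry group of $D_4$) to see that it cuts every cell of $\mathcal{F}_1$ and no cells of $\mathcal{F}_2$ or $\mathcal{F}_3$. Your explicit functionals $x_1\pm x_2\pm x_3\pm x_4$ with even and odd sign patterns are the same verification, just carried out on a different representative; this part is fine.

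The gap is in your deduction of property (2). Your key observation, the relation $x_1^2+x_2^2+x_3^2+x_4^2=Q_{D_4}$ and its analogues for the other two families, is correct, but the inference ``the four equatorial forms span only a three-dimensional space modulo $Q_{D_4}$, so at most three of them can be added independently to the secondary cone'' is a non sequitur: linear dependence does not prevent five forms from all lying in the closure of a single secondary cone, and your dimension count $10=1+3\cdot3$ does not exclude four slices of one family either, because a cone generated by $Q_{D_4}$ together with $4+3+3$ rank-one forms would still be $10$-dimensional (the redundant fourth generator costs nothing in dimension). What makes your relation actually work is its \emph{positivity}: if all four dicings of, say, $\mathcal{F}_1$ occurred in one Delone tiling, then $Q_{D_4},x_1^2,\dots,x_4^2$ would all lie in the closure of one secondary cone, and Theorem \ref{onecone} with all coefficients equal to $1$ would force the Delone tiling of $Q_{D_4}+\sum_i x_i^2=2Q_{D_4}$, which is $\mathcal{D}_{24}$ itself, to coincide with the common refinement of $\mathcal{D}_{24}$ by the four dicings, a proper refinement of $\mathcal{D}_{24}$ --- a contradiction. (Equivalently: subtracting $\lambda_{\min}$ times the relation rewrites any positive combination using all four forms as one using at most three.) You never take this step, and without it the argument does not bound the number of slicing families. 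The paper's own proof of (2) is more elementary and purely geometric: since every admissible slice is equatorial, it passes through the centers of the crosspolytopes it cuts, so a fourth slicing family of $\mathcal{F}_i$ would turn those centers into vertices of the tiling, contradicting the fact that the vertex set of a Delone tiling must remain the lattice. Finally, your closing appeal to Vallentin's enumeration is the same external input the paper uses, but the paper needs it only to know that every non-zonotopal tiling is a hyperplane refinement of $\mathcal{D}_{24}$; the bound of three slices per family is then proved rather than read off the tables, and since the proposition claims only ``up to three'', the exhaustiveness statement you defer to \cite{Val} is not actually required.
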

\begin{proof}
If we slice tiling $\mathcal{D}_{24}$ with a family of hyperplanes $\mathcal{H}$ then every crosspolytope should be sliced by its ``equator'' (i.e. three pairs of opposite vertices) or should not sliced at all. Due to symmetry with respect to symmetry group of $D_4$ it is enough to consider only one family of hyperplanes that slices one of crosspolytopes.

Without loss of generality we can assume that the crosspolytope 
$$C=\conv\{(0,0,0,0),(2,0,0,0),(1,\pm 1,0,0),(1,0,\pm 1,0),(1,0,0,\pm 1)\}\in\mathcal{F}_1$$
is sliced with the hyperplane $x_1=1$. Then the whole family $\mathcal{H}$ is the family $\{x_1=k,k\in \Z\}$. One can easily check that $\mathcal{H}$ slice every crosspolytope from the family $\mathcal{F}_1$ and slice no crosspolytopes from other families. So the first assertion of the proposition is done.

The second point is also quite clear. If we have more than three families of hyperplanes that slice $\mathcal{F}_i$ then we will get new vertices of the tiling in the centers of crosspolytopes from $\mathcal{F}_i$. And this is forbidden since the set of vertices of the tiling (i.e. lattice) should remain the same.
\end{proof}

\begin{thm}\label{nonzonthm}
If $P$ is a four-dimensional parallelohedron that contains affine image of $24$-cell as Minkowski summand (i.e. if correspondent Delone tiling is an affine image of a refinement of $\mathcal{D}_{24}$) then the theorem $\ref{homologies}$ is valid for $P$.
\end{thm}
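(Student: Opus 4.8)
The plan is to verify the hypothesis of Theorem \ref{homologies} through its combinatorial reformulation, Theorem \ref{graphcycles}: it suffices to show that the cycle group of the Venkov graph $G_P$ is generated by the gain cycles $C_P$ (the half-belt triangles and the trivially contractible cycles). I would first dispose of the base case, the pure $24$-cell $K$. Every $2$-face of the Delone tiling $\mathcal{D}_{24}$ is a triangular face of one of the crosspolytopes in $\mathcal{F}_1,\mathcal{F}_2,\mathcal{F}_3$, so by Voronoi--Delone duality every codimension-$2$ face of the Voronoi tiling $\mathcal{T}(K)$ has a triangular dual cell; hence $K$ is $(d-2)$-primitive and all its belts are $6$-belts. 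Consequently no face is deleted in passing to $K_\delta=\partial K\cong S^3$, so $K_\pi\cong\mathbb{RP}^3$ and $H_1(K_\pi,\Q)=0$. The condition of Theorem \ref{homologies} therefore holds trivially for $K$, in accordance with the fact that this theorem already covers Zhitomirskii's Theorem \ref{zhitthm}.

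For a general refinement I would use the Proposition together with Theorem \ref{onecone} to present $P$, up to an affine map, as a Minkowski sum of $K$ with the segments dual to the slicing families of hyperplanes, at most three segments per crosspolytope family $\mathcal{F}_i$. I then argue by induction on the number of slicing families, the inductive step being the passage from $P$ to $P+s$ for one new segment $s$, i.e.\ the refinement of the Delone tiling by one further family of parallel hyperplanes. Adding such a zone splits the affected crosspolytopes and creates a block of new facets of the Voronoi polytope, hence new vertices and edges of $G_P$, new $6$- and $4$-belts, and new dual $3$-cells (by Theorem \ref{thmdel}, of one of the five types). The task is to show that the cycles newly introduced into the cycle group lie in the span of the new half-belt triangles together with the new trivially contractible cycles coming from these new dual $3$-cells.

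The main obstacle is precisely the part of the configuration that pushes $P$ outside the reach of the earlier theorems: when two or three families slice the same family $\mathcal{F}_i$, their intersections produce non-primitive codimension-$2$ faces ($4$-belts, the red edges deleted on the $\delta$-surface) and dual $3$-cells combinatorially equivalent to a cube or a triangular prism, exactly the two types excluded from Ordine's Theorem \ref{ordthm}. Around these cells the half-belt triangles do not tile the local cycle structure automatically, and one must check that the boundary cycles of the cube- and prism-cells are still expressible through half-belts and the remaining trivially contractible cycles. Here I would exploit the large symmetry group of $D_4$ (its Weyl group together with the $S_3$ permuting $\mathcal{F}_1,\mathcal{F}_2,\mathcal{F}_3$) to reduce the slicing data to a short list of inequivalent local configurations and to verify the generation condition on each. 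Closing this finite case analysis, and in particular controlling the interaction of the deleted $4$-belts with the surviving primitive structure, is where I expect the real work to lie.
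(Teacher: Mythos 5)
Your setup is sound --- the reduction to Theorem \ref{graphcycles}, the base case of the pure $24$-cell, and the presentation of $P$ as a Minkowski sum of the $24$-cell with at most three segments per family $\mathcal{F}_i$ all match the paper. But the proof stops exactly where the theorem needs proving: the case analysis you describe in your last paragraph is never carried out, only announced (``where I expect the real work to lie''). Moreover, the inductive step as you formulate it --- ``show that the newly introduced cycles lie in the span of the new gain cycles'' --- is the wrong invariant for a one-segment-at-a-time induction, because adding a zone does not only create cycles, it \emph{destroys} gain cycles: when a second hyperplane family cuts a family $\mathcal{F}_i$ already cut once, the octahedral $3$-faces created by the first cut are sliced through their equators, and the quadrilateral trivially contractible cycles around their vertices cease to be gain cycles. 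After such a step the previous generating set need no longer consist of gain cycles at all, so establishing the inductive step would force you back into precisely the analysis you defer. There is also a factual error in your description of the obstacle: the twice-cut configurations do \emph{not} produce cube or triangular-prism dual $3$-cells. The Delone $3$-faces occurring in the non-zonotopal case are only tetrahedra, octahedra, and quadrangular pyramids (one cut adds an octahedron; a second cut adds a quadrangular $2$-face and four quadrangular pyramids; a third adds an edge, eight triangles, and twelve tetrahedra), so every non-zonotopal four-dimensional parallelohedron in fact satisfies Ordine's hypothesis (Theorem \ref{ordthm}); the genuine obstruction is only the non-primitive ridges dual to the quadrangular $2$-faces created by a second cut.

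The paper's proof avoids your finite case analysis and the $D_4$-symmetry reduction entirely, by an ordering trick built on the face bookkeeping above. If every family is cut $0$, $1$, or $3$ times, all Delone $2$-faces are triangles, so $P$ satisfies Zhitomirskii's theorem \ref{zhitthm}, $P_\pi\cong\mathbb{RP}^3$, and Theorem \ref{homologies} holds trivially. Otherwise, let $I$ be the set of families cut three times and let $D'$ be the tiling obtained by performing \emph{only} those triple cuts. Then $D'$ has all $2$-faces triangular and all $3$-faces tetrahedral, so the corresponding polytope $P'$ is Zhitomirskii and the cycle group of $G_{P'}$ is generated by its trivially contractible cycles, all of which are triangles coming from tetrahedra. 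Now add the remaining single and double cuts: these add no Delone edges and no Delone triangles, hence $G_P=G_{P'}$ as a graph, and they destroy none of the tetrahedra of $D'$, hence none of the triangular generators; since the only cycles that a second cut can destroy are the quadrilateral ones around octahedron vertices, and those were never gain cycles of $D'$, the generation of the cycle group by $C_{P'}\subseteq C_P$ is inherited verbatim, proving the theorem with no local configuration analysis at all. This ordering of the cuts --- triple cuts first, then the graph-preserving single and double cuts --- is the key idea missing from your proposal.
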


\begin{proof}
First we describe what will happen with one crosspolytope $C$ if we slice it one, two or three times. One hyperplane cuts $C$ into two pyramids over octahedra (i.e. three-dimensional crosspolytopes). So the only other face added to Delone triagulation will be one three-dimensional crosspolytope (we will not describe additional four-dimensional tiles). 

The second hyperplane cuts each pyramid into doubly iterated pyramids over squares (i.e. pyramid over pyramid over square). So this construction adds one two-dimensional quadrangular face (bases of all four pyramids) and four three-dimensinal pyramidal faces (pyramids over squares) to the initial four-dimensional crosspolytope.

The third hyperplane cuts $C$ into eight simplices with common edge. The new faces of the tiling are: one edge, eight triangles, and twelve tetrahedra. So after the third cut of a single crosspolytope there will be again only tetrahedral three-dimensional faces.

If each family is cut one or three times, or left uncut then all two-dimensional faces of Delone tiling are triangular and correspondent Voronoi polytope $P$ satisfies the theorem of Zhitomirskii \ref{zhitthm}. In this case there are no non-primitive faces of codimension $2$ and $P_\pi\cong\mathbb{R}P^{3}$ and its rational homologies are trivial. Therefore the theorem \ref{homologies} holds for $P$.

Now consider that some families $\mathcal{F}_i$ with $i\in I\subset\{1,2,3\}$ were cut three times. Then the tiling $D'$ obtained from $D_{24}$ tiling by cutting only families $\mathcal{F}_i, i\in I$ satisfies the following two conditions:

\begin{itemize}
\item all three-dimensional faces of the tiling $D'$ are tetrahedral, since all $3$-faces of the initial tiling $D_{24}$ are tetrahedral and all additional $3$-faces are tetrahedral;
\item all two-dimensional faces are triangles, since all additional $2$-faces are triangles.
\end{itemize}

The second point implies that correspondent Voronoi polytope $P'$ satisfies conditions of the Zhitomirskii theorem \ref{zhitthm} and therefore its satisfies conditions of the theorem \ref{homologies}. So by lemma \ref{graphcycles} the group of cycles of the graph $G_{P'}$ is generated by the set $C_{P'}$.

Now we start to add hyperplanes that slice families from $\mathcal{F}_i,i\notin I$. This does not change vertices and edges of the graph $G_{P'}$ since there will be no additional edges (correspondent to facets of Voronoi polytope and vertices of the primitive Venkov graph) and triangles (correspondent to primitive 2-faces and edges of the primitive Venkov graph) in the Delone tiling. Therefore $G_P=G_{P'}$.

Now take a look on two sets of gain cycles $C_P$ and $C_{P'}$. The set of half-belt cycles of $P$ contains the set of half-belt cycles of $P'$ because no Delone triangles were deleted during transformation of the tiling $D'$ into tiling $D$. But the set of trivially contractible cycles could shrink during this transformation. How this can happen? If there is a 3-dimensional polytope $Q$ with vertex $O$ in the Delone tiling $D'$ and all 2-faces of $Q$ incident to $O$ are triangles then there is a trivially contractible cycle in the graph $G_{P'}$ correspondent to sequence of all edges of $Q$ incident to $O$. 

Therefore trivially contractible cycles could ``disappear'' only on the second slicing of some family if we slice an octahedron $OABO'A'B'$ by its equator $OAO'A'$. In that case trivially contractible cycle correspondent to edges $OA-OB-OA'-OB'$ will be destroyed. But this cycle was in the initial tiling $D'$ and it was not trivially contractible becuase in the tiling $D'$ all trivially contractible cycles are formed by three edges (all 3-faces of this tiling are tetrahedra). Since theorem \ref{graphcycles} is true for the tiling $D'$ (and moreover it is true that group of cycles of $G_{P'}$ was generated trivially contractible cycles in $C_{P'}$) and no cycle from $C_{P'}$ was destroyed then the theorem \ref{graphcycles} is true for $P$.
\end{proof}

\section{Zonotopal four-dimensional parallelohedra}\label{zon}

There are 17 four-dimensional space-filling zonotopes. We will list (almost) all ``interesting'' primitive Venkov graphs and corresponding gain cycles in the Appendix \ref{zonotopes}. For example we will not present primitive Venkov graphs for zonotopes that satisfying the case of Zhitomirskii (theorem \ref{zhitthm}) since the correspondent $\delta$-surface is the sphere $\mathbb{S}^3$ and the theorem \ref{homologies} is true. Also we will not present graphs for direct products $Z=Z_1\times Z_2$ where $Z_1$ and $Z_2$ are space-filling zonotopes of smaller dimensions since in that case $G_Z=G_{Z_1}\cup G_{Z_2}$ and $C_Z=C_{Z_1}\cup C_{Z_2}$ and the theorem \ref{graphcycles} is true due to induction.

Among these 17 zonotopes there is one zonotope correspondent to graphical zonotopal lattice $K_{3,3}$ (due to classification from \cite[Sect. 3.5]{Val}); this is the zonotope correspondent to dicing $\mathcal{D}^2(4)$ from \cite[Sect. 7]{Erd}. This polytope satisfies Zhitomirski theorem \ref{zhitthm}; indeed from \cite[Table 1]{Erd} or \cite[p. 61]{Val} one can find that this polytope has 30 facets and then Dolbilin's index theorem \cite{Dol} implies that it can not have non-primitive ridges.

All other 16 can be obtained as zonotopes correspondent to cographical zonotopal lattices (see \cite[Sect. 3.5]{Val}); equivalent description from \cite[Sect. 7]{Erd} is the following: they can be obtained by deleting zone vectors from other maximal dicing $\mathcal{D}^1(4)$. The zonotope correspondent to this dicing is {\it permutahedron}, i.e. convex hull of all 120 points in $\mathbb{R}^5$ with permuted coordinates $1,2,3,4,5$ (it is easy to see that all vertices of permutahedron lies in one four-dimensional plane). So, these zonotopes are $\Pi$-zonotopes due to definition from \cite{Gar}.

We will use approach to $\Pi$-zonotopes from the work \cite{Gar}. Each four-dimensional $\Pi$-zonotope $Z$ can be represented as connected graph $G$ with 5 vertices enumerated with numbers from 1 to 5. An edge with vertices $i$ and $j$ corresponds to a zone vector $\ib e_i -\ib e_j$ of the zonotope (all points and vectors are considered in $\mathbb{R}^5$ but they all lies in one four-dimensional plane). Here $\ib e_k$ denotes the $k$-th standard vector of $\mathbb{R}^5$.

The following properties can be easily checked. See also \cite{Gar} for some of these properties.
\begin{enumerate}
\item Pairs of facets of $Z$ correspondent to partitions of vertices of $G$ into two non-empty subsets that generates connected subgraphs.
\item Ridges lying in one belt of $Z$ orrespondent to partitions of vertices of $G$ into three non-empty subsets that generates connected subgraphs. Moreover, these ridges are primitive and belt contains 6 facet if and only if there is at least one edge between any pair of subsets.
\item Some ridges from one family belongs to a pair of opposite facets if and only if correspondent ridge-defining partition in three sets is a refinement of facet-defining partition into two sets.
\item Projection of $Z$ along its zone vector $\ib e_i -\ib e_j$ is a $\Pi$-zonotope with graph $G'$ obtained from $G$ by gluing vertices $i$ and $j$ together (contracting the edge $ij$).
\item ``Local'' structure of $Z$ at some edge defined by a zone vector $\ib e_i -\ib e_j$ is the same as the structure of the zonotope projected along this edge at correspondent vertex. It means that section of the tiling by copies of $Z$ transversal to this edge is the same as the structure of the tiling by projected zonotopes.
\item If primitive Venkov graph of projected zonotope is generated by the set of gain cycles (i.e. if it is a rhombic dodecahedron, elongated dodecahedron or truncated octahedron) then correspondent subgraph of $G$ is also generated by set of gain cycles of $Z$.
\end{enumerate}

The first three properties allows us to construct primitive Venkov graph for a $\Pi$-zonotope $Z$. Though they allow us to reconstruct only half-belt cycles but not trivially contractible, since they do not carry any information about local structure at faces of codimension 2. The last three properties fills this gap, moreover they allow us to use ``extended'' conditions that arise from properties that projection of $\Pi$-zonotope is a $\Pi$-zonotope again and that all three-dimensional space-filling zonotopes are $\Pi$-zonotopes. We need this extension (from one neighbourhood of face of codimension 3 to subgraph generated by projected zonotope) because it is not easy to extract particular face of one zonotope from graph structure but much easier to extract properties of family of equivalent faces.

Now we will show how this works for one case of zonotope that is not a direct sum of two zonotopes and is not satisfying Zhitomirski theorem \ref{zhitthm}.

\begin{exam}\label{piexample}
We will construct primitive Venkov graph for $\Pi$-zonotope $Z$ with the graph shown on the figure \ref{pict:zongraph} and check that its group of cycles is generated by gain cycles.

\begin{figure}[!ht]
\begin{center}
\includegraphics[scale=1]{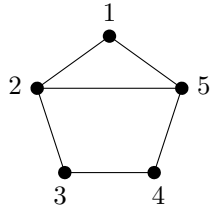}
\end{center}
\caption{Graph of a $\Pi$-zonotope.}
\label{pict:zongraph}
\end{figure}

This zonotope has six zone vectors $\ib e_1-\ib e_2$, $\ib e_2-\ib e_3$, $\ib e_3-\ib e_4$, $\ib e_4-\ib e_5$, $\ib e_5-\ib e_1$, and $\ib e_2-\ib e_5$. One can check that these vectors can not be divided into two mutually independent subset thus this zonotope can not be represented as direct sum of zonotopes of smaller dimension.

The following partitions defines pairs of facets of the zonotope $Z$.
\begin{enumerate}
\item[$F_1$:] $\{1\}$ and $\{2,3,4,5\}$; 
\item[$F_2$:] $\{1,2\}$ and $\{3,4,5\}$;
\item[$F_3$:] $\{1,5\}$ and $\{2,3,4\}$; 
\item[$F_4$:] $\{1,2,3\}$ and $\{4,5\}$;
\item[$F_5$:] $\{1,2,5\}$ and $\{3,4\}$; 

\item[$F_{6}$:] $\{1,4,5\}$ and $\{2,3\}$; 
\item[$F_{7}$:] $\{1,2,3,4\}$ and $\{5\}$; 
\item[$F_{8}$:] $\{1,2,3,5\}$ and $\{4\}$; 
\item[$F_{9}$:] $\{1,2,4,5\}$ and $\{3\}$; 
\item[$F_{10}$:] $\{1,3,4,5\}$ and $\{2\}$; 
\end{enumerate}
For example, partition $\{2,5\}$ and $\{1,3,4\}$ does not define a pair of facets since subgraph on vertices 1, 3, and 4 is not connected.

The following partitions defines belts of ridges of the zonotope $Z$.

\begin{enumerate}
\item[$f_1$:] $\{1\}$, $\{2\},$ and $\{3,4,5\}$. This is a primitive belt with facets $F_1$, $F_2$, and $F_{10}$.
\item[$f_2$:] $\{1\}$, $\{2,3\},$ and $\{4,5\}$. This is a primitive belt with facets $F_1$, $F_4$, and $F_6$.
\item[$f_3$:] $\{1\}$, $\{2,5\},$ and $\{3,4\}$. This is NOT a primitive belt with facets $F_1$ and $F_5$.
\item[$f_4$:] $\{1\}$, $\{2,3,4\}$, and $\{5\}$. This is a primitive belt with facets $F_1$, $F_3$, and $F_{7}$.
\item[$f_5$:] $\{1\}$, $\{2,3,5\}$, and $\{4\}$. This is NOT a primitive belt with facets $F_1$ and $F_8$.
\item[$f_6$:] $\{1\}$, $\{2,4,5\}$, and $\{3\}$. This is NOT a primitive belt with facets $F_1$ and $F_9$.
\item[$f_7$:] $\{1,2\}$, $\{3\},$ and $\{4,5\}$. This is a primitive belt with facets $F_2$, $F_4$, and $F_9$.
\item[$f_8$:] $\{1,2\}$, $\{3,4\},$ and $\{5\}$. This is a primitive belt with facets $F_2$, $F_5$, and $F_7$.
\item[$f_{9}$:] $\{1,5\}$, $\{2\},$ and $\{3,4\}$. This is a primitive belt with facets $F_3$, $F_5$, and $F_{10}$.
\item[$f_{10}$:] $\{1,5\}$, $\{2,3\},$ and $\{4\}$. This is a primitive belt with facets $F_3$, $F_6$, and $F_{8}$.

\item[$f_{11}$:] $\{1,2,3\}$, $\{4\},$ and $\{5\}$. This is a primitive belt with facets $F_4$, $F_7$, and $F_{8}$.
\item[$f_{12}$:] $\{1,2,5\}$, $\{3\},$ and $\{4\}$. This is a primitive belt with facets $F_5$, $F_8$, and $F_{9}$.
\item[$f_{13}$:] $\{1,4,5\}$, $\{2\},$ and $\{3\}$. This is a primitive belt with facets $F_6$, $F_9$, and $F_{10}$.

\end{enumerate}
So, the primitive Venkov graph $G_Z$ of $Z$ has 10 vertices, 30 edges (number of primitive belts times 3). This graph is shown on the figure \ref{pict:vengraph}.

\begin{figure}[!ht]
\begin{center}
\includegraphics[scale=1]{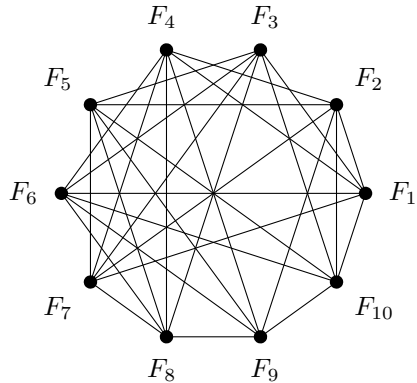}
\end{center}
\caption{Primitive Venkov graph $G_Z$ of the $\Pi$-zonotope $Z$.}
\label{pict:vengraph}
\end{figure}

Now we will reconstruct trivially contractible cycles. We will examine local structures at faces of codimension 3. In the case of four-dimensional polytopes we need to examine local structure at edges. We start from family of edges defined by zone vector $\ib e_4 - \ib e_5$. After projection along this edge we will get the $\Pi$-zonotope with the graph on the figure \ref{pict:projected}. To obtain this graph we glue together vertices $4$ and $5$ of the graph from the figure \ref{pict:zongraph} and remove loops and double edges.

\begin{figure}[!ht]
\begin{center}
\includegraphics[height=2cm]{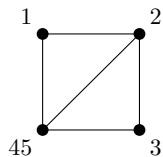}
\end{center}
\caption{Graph of the projected $\Pi$-zonotope.}
\label{pict:projected}
\end{figure}

This is the graph of elongated dodecahedron. So structure of tiling by copies of $Z$ at edges parallel to vector $\ib e_4-\ib e_5$ is (combinatorially, topologically) equivalent to the tiling by elongated dodecahedra at vertices. The theorem \ref{graphcycles} is true for elongated dodecahedron (one can check this easily as well as for all other three-dimensional parallelohedra) and therefore cycles of subgraph of the graph $G_Z$ generated by projection parallel to $\ib e_4 - \ib e_5$ are generated by gain cycles of the whole zonotope $Z$.

This subgraph has vertices $F_1$, $F_2$, $F_4$, $F_6$, $F_9$, and $F_{10}$ and shown on the figure \ref{pict:vensubgraph}. One can check that this subgraph is isomorphic to the primitive Venkov graph of elongated dodecahedron on the figure \ref{vengraph}.

\begin{figure}[!ht]
\begin{center}
\includegraphics[scale=1]{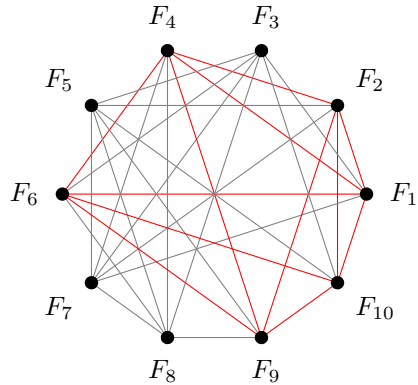}
\end{center}
\caption{Subgraph generated by projection parallel to zone vector $\ib e_4-\ib e_5$.}
\label{pict:vensubgraph}
\end{figure}

Similarly, we can construct projections along other zone vectors and fine additional subgraphs of the graph $G$ with cycles generated by gain cycles. These cycles are the following.
\begin{itemize}
\item Projection along $\ib e_1-\ib e_2$ is rhombic dodecahedron (see remark after this example on how we can distinguish three-dimensional projections); correspondent gain cycles generates cycles of the subgraph on facets $F_2$, $F_4$, $F_5$, $F_7$, $F_8$, and $F_9$.

\item Projection along $\ib e_2-\ib e_3$ is elongated dodecahedron; correspondent gain cycles generates cycles of the subgraph on facets $F_1$, $F_3$, $F_4$, $F_6$, $F_7$, and $F_8$.

\item Projection along $\ib e_3-\ib e_4$ is elongated dodecahedron; correspondent gain cycles generates cycles of the subgraph on facets $F_1$, $F_2$, $F_3$, $F_5$, $F_7$, and $F_{10}$.

\item Projection along $\ib e_5-\ib e_1$ is rhombic dodecahedron; correspondent gain cycles generates cycles of the subgraph on facets $F_3$, $F_5$, $F_6$, $F_8$, $F_9$, and $F_{10}$.

\item Projection along $\ib e_2-\ib e_5$ is hexagonal prism (existence of this projection shows that $Z$ is not the Ordine's case from theorem \ref{ordthm}); correspondent gain cycles generates cycles of the subgraph on facets $F_1$, $F_5$, $F_8$, and $F_9$.
\end{itemize}

Now one can check that cycles from listed subgraphs generates all cycles of the graph $G_Z$. For example we can do that by checking that subgroup generated by cycles from listed subgraphs has order exactly $21=30-10+1$ by finding $21$ independent cycles. This proves that $Z$ satisfies conditions of the theorem \ref{graphcycles}.
\end{exam}

\begin{rem}
There are six non-isomorphic connected graphs with four vertices. These graphs are shown on the figure \ref{pict:3dgraphs}. All these graphs can be obtained as graphs of projections of $\Pi$-zonotopes along zone vectors. 

\begin{figure}[!ht]
\begin{center}
\begin{subfigure}[b]{0.3\textwidth}
\begin{center}\includegraphics[width=2cm]{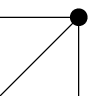}\end{center}
\caption{}
\label{pict:troc}
\end{subfigure}
\begin{subfigure}[b]{0.3\textwidth}
\begin{center}\includegraphics[width=2cm]{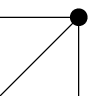}\end{center}
\caption{}\label{pict:eldodec}
\end{subfigure}
\begin{subfigure}[b]{0.3\textwidth}
\begin{center}\includegraphics[width=2cm]{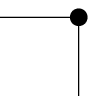}\end{center}
\caption{}\label{pict:rhdodec}
\end{subfigure}\\
\begin{subfigure}[b]{0.3\textwidth}
\begin{center}\includegraphics[width=2cm]{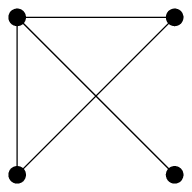}\end{center}
\caption{}
\label{pict:hexprism}
\end{subfigure}
\begin{subfigure}[b]{0.3\textwidth}
\begin{center}\includegraphics[width=2cm]{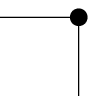}\end{center}
\caption{}\label{pict:cube1}
\end{subfigure}
\begin{subfigure}[b]{0.3\textwidth}
\begin{center}\includegraphics[width=2cm]{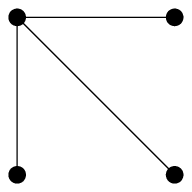}\end{center}
\caption{}\label{pict:cube2}
\end{subfigure}
\end{center}
\caption{Non-isomorphic connected graphs with four vertices.}
\label{pict:3dgraphs}
\end{figure}

The upper row of this figure represents irreducible parallelohedra, namely, truncated octahedron on the figure \ref{pict:troc}, elongated dodecahedron on the figure \ref{pict:eldodec}, and rhombic dodecahedron on the figure \ref{pict:rhdodec}. And the lower row illustrate graphs for reducible ones, hexagonal prism on the figure \ref{pict:hexprism}, and two represntations of a cube on figures \ref{pict:cube1} and \ref{pict:cube2}. Thus we can recognise any projection by its graph representation.
\end{rem}

\begin{thm}\label{zonthm}
Conditions of the theorem $\ref{graphcycles}$ is true for all four-dimensional space-filling zonotopes.
\end{thm}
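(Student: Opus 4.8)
The plan is to partition the seventeen four-dimensional space-filling zonotopes into three families and to treat each family by the machinery assembled in the previous sections. First I would dispose of the zonotopes satisfying the Zhitomirskii theorem \ref{zhitthm}: for such a $Z$ every belt is a $6$-belt, there are no non-primitive ridges, the $\delta$-surface is $\mathbb{S}^3$ and the $\pi$-surface is $\mathbb{R}P^{3}$, whose rational homology is trivial, so theorem \ref{homologies} holds automatically; this covers the $K_{3,3}$ graphical zonotope discussed above. Next I would handle the reducible zonotopes $Z=Z_1\times Z_2$: here $G_Z=G_{Z_1}\cup G_{Z_2}$ and $C_Z=C_{Z_1}\cup C_{Z_2}$, so the cycle group of $G_Z$ is the direct sum of the two factor cycle groups and the statement follows by induction on dimension. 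After these reductions only finitely many irreducible, non-Zhitomirskii $\Pi$-zonotopes remain, and I would treat each of them one graph at a time exactly as in Example \ref{piexample}.

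For such a $Z$, presented by its connected $5$-vertex graph $G$, I would first reconstruct the primitive Venkov graph $G_Z$ from the combinatorics of $G$: its vertices are the connected bipartitions of $\{1,\dots,5\}$ and its blue edges occur in triangles, one per primitive $6$-belt, that is, per connected tripartition with an edge of $G$ between each pair of parts. This produces all half-belt cycles of $Z$ but none of the trivially contractible ones, so the essential step is to manufacture enough of the latter. For this I would run through the zone vectors $\ib e_i-\ib e_j$ of $Z$: projecting $Z$ along such a vector contracts the edge $ij$ of $G$ and yields a three-dimensional $\Pi$-zonotope whose graph is one of the six connected four-vertex graphs, hence a rhombic dodecahedron, elongated dodecahedron, truncated octahedron, hexagonal prism or cube. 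By the local-structure property the neighbourhood of $\mathcal T(Z)$ along that edge is combinatorially the three-dimensional tiling by the projected zonotope, and since theorem \ref{graphcycles} is already known for every three-dimensional parallelohedron, the cycles of the subgraph of $G_Z$ cut out by each such projection are generated by gain cycles of $Z$.

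The core of the argument is then a linear-algebra check that these projection subgraphs jointly span the cycle group of $G_Z$. Since $Z$ is irreducible, $G_Z$ is connected and its cycle group is free of rank $E-V+1$, where $V$ is the number of connected bipartitions and $E$ is three times the number of primitive $6$-belts; I would verify that the gain cycles supplied by the projections reach this rank by exhibiting that many independent cycles, exactly the count $21=30-10+1$ carried out for one graph in Example \ref{piexample}. I expect this spanning verification to be the main obstacle: a priori the subgraphs coming from the various projections could fail to cover the whole cycle space, and ruling this out is not automatic, since one must check in each case that the projections are numerous and overlapping enough to generate $H_1(G_Z,\Q)$. As the list of relevant graphs is finite and each graph is small, I would record the projection subgraphs and the corresponding rank count for every irreducible non-Zhitomirskii $\Pi$-zonotope in the Appendix, so that the proof becomes a finite but genuinely necessary case check rather than a single uniform estimate.
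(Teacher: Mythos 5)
Your proposal follows essentially the same route as the paper: dispose of the Zhitomirskii cases (including $K_{3,3}$) and the reducible direct products by induction, then for each remaining irreducible $\Pi$-zonotope reconstruct the primitive Venkov graph from the $5$-vertex graph, use projections along zone vectors together with the local-structure property and the known three-dimensional result to obtain subgraphs whose cycles are generated by gain cycles, and finish with a case-by-case rank verification recorded in an appendix. This is precisely the paper's proof of Theorem \ref{zonthm}, namely the algorithm of Example \ref{piexample} applied exhaustively to the finite list of zonotopes.
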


\begin{proof}
The zonotope $K_{3,3}$ satisfies Zhitomirski theorem \ref{zhitthm} and therefore is satisfies conditions of the theorem \ref{graphcycles} too.

For all other $\Pi$-zonotopes (correspondent to cographical lattices) we can use alogrithm described in the example \ref{piexample}. Correspondent primitive Venkov graphs or additional arguments are shown in the appendix \ref{zonotopes}. This exhaustive method finishes the proof.
\end{proof}

\section{Conclusions and remarks}

Two theorems \ref{nonzonthm} and \ref{zonthm} give us the main result of this paper.

\begin{thm}
Any four-dimensional parallelohedron satisfies conditions of the theorem $\ref{homologies}$.
\end{thm}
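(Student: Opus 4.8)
The plan is to observe that this statement is a direct corollary of the classification recalled in Section~\ref{sec:class} together with the two case theorems already established, so the proof is simply a dichotomy. The one structural input I would invoke is that every four-dimensional parallelohedron $P$ is, by Delone's proof of the Voronoi conjecture in dimension $4$, an affine image of a Dirichlet--Voronoi polytope $V(Q)$ for some positive definite quadratic form $Q$ on $\R^4$. By Theorem~\ref{onecone}, such $V(Q)$ is a Minkowski sum of mainstay parallelohedra corresponding to the rigid forms spanning the secondary cone that contains $Q$. The decisive fact, established in Section~\ref{sec:class}, is that the only rigid forms in $\R^4$ are the rank-one forms and (up to $GL_4(\Z)$) the single form associated to the $D_4$ lattice. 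This yields a clean dichotomy: either the $D_4$ form occurs among the summands, or it does not.

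First I would treat the case where no $D_4$ summand appears. Then all rigid summands have rank one, so $V(Q)$ is a Minkowski sum of segments, i.e.\ a zonotope, and $P$ is an affine image of a four-dimensional space-filling zonotope. By Theorem~\ref{zonthm}, the conditions of Theorem~\ref{graphcycles} hold for every such zonotope, and by Theorem~\ref{graphcycles} this in turn guarantees that the conditions of Theorem~\ref{homologies} hold. Second I would treat the complementary case, where the $D_4$ form does occur as a summand. Then $V(Q)$ is a Minkowski sum of the regular $24$-cell with finitely many segments, which is precisely the hypothesis of Theorem~\ref{nonzonthm}, so Theorem~\ref{homologies} holds for $P$ directly.

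The only point requiring a brief justification is affine invariance: the $\pi$-surface $P_\pi$, the Venkov graph $G_P$, the set of gain cycles $C_P$, and hence the validity of Theorem~\ref{homologies}, all depend only on the combinatorial structure of the tiling $\mathcal{T}(P)$, which is preserved under affine transformations. Thus passing from $V(Q)$ to its affine image $P$ does not affect any of the relevant conditions, and the two cases together exhaust all four-dimensional parallelohedra.

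I do not expect a genuine obstacle here, since the substantive work has been carried out in Theorems~\ref{nonzonthm} and~\ref{zonthm}; the final statement is a formal assembly. The subtle points to double-check are merely the completeness of the two-family classification (that there is no third rigid form in $\R^4$, already recorded in Section~\ref{sec:class}) and the affine invariance just noted. With these in hand, the proof reduces to the sentence ``combine Theorem~\ref{nonzonthm} and Theorem~\ref{zonthm} over the two families.''
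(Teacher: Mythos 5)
Your proof is correct and matches the paper's argument: the paper likewise deduces this theorem immediately from Theorems~\ref{nonzonthm} and~\ref{zonthm}, relying on the dichotomy from Section~\ref{sec:class} (rigid forms in $\R^4$ are either rank one or the $D_4$ form, so every four-dimensional parallelohedron is either a space-filling zonotope or an affine image of a Minkowski sum of the $24$-cell with segments). Your additional remarks on affine invariance and on the completeness of the classification are sensible sanity checks but are treated as implicit in the paper.
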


This theorem does not give a new proof of the Voronoi conjecture for four-dimensional case. But it shows the theorem \ref{homologies} covers some additional cases of the Voronoi conjecture that had not been covered by Ordine's theorem \ref{ordthm}. One of these examples is shown in the example \ref{piexample}.

\appendix

\section{Primitive Venkov graphs of some four-dimensional $\Pi$-zonotopes}\label{zonotopes}

\newcounter{rownum}
\setcounter{rownum}{0}

\begin{longtable}{|c|c|l|c|}
\hline
\#&Graph&Facets or special properties&Primitive Venkov graph\\
\hline
\addtocounter{rownum}{1}\arabic{rownum} &\includegraphics[scale=1]{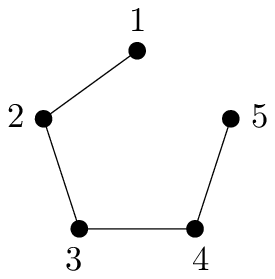}&\parbox[b]{0.35\textwidth}{This is a four-dimensional cube}&\\\hline
\addtocounter{rownum}{1}\arabic{rownum} &\includegraphics[scale=1]{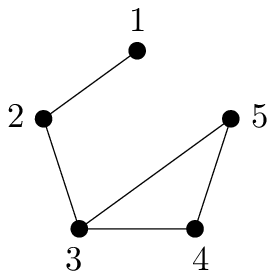}&\parbox[b]{0.35\textwidth}{This is a direct sum of a segment and hexagonal prism}&\\\hline
\addtocounter{rownum}{1}\arabic{rownum} &\includegraphics[scale=1]{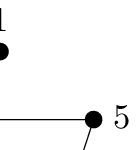}&\parbox[b]{0.35\textwidth}{This is a direct sum of a segment and rhombic dodecahedron}&\\\hline
\addtocounter{rownum}{1}\arabic{rownum} &\includegraphics[scale=1]{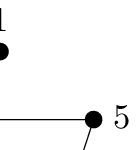}&\parbox[b]{0.35\textwidth}{This is a direct sum of a segment and elongated dodecahedron}&\\\hline
\addtocounter{rownum}{1}\arabic{rownum} &\includegraphics[scale=1]{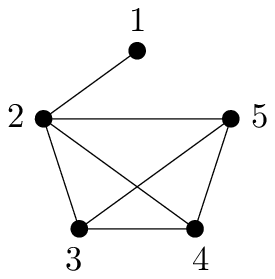}&\parbox[b]{0.35\textwidth}{This is a direct sum of a segment and truncated octahedron}&\\\hline
\addtocounter{rownum}{1}\arabic{rownum} &\includegraphics[scale=1]{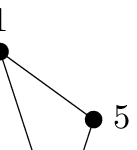}&\parbox[b]{0.35\textwidth}{This is a direct sum of two triangles in complementary planes}&\\\hline
\addtocounter{rownum}{1}\arabic{rownum} &\includegraphics[scale=1]{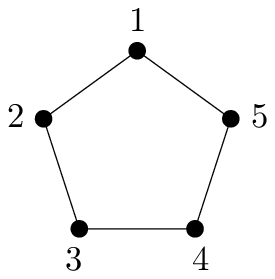}&\parbox[b]{0.35\textwidth}{This zonotope satisfies Zhitomirski theorem \ref{zhitthm} since all its projections along zone vectors are rhombic dodecahedra}&\\\hline
\addtocounter{rownum}{1}\arabic{rownum} &\includegraphics[scale=1]{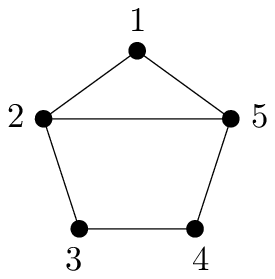}&\parbox[b]{0.35\textwidth}{
$F_1$: $\{1\}$ and $\{2,3,4,5\}$;\\
$F_2$: $\{1,2\}$ and $\{3,4,5\}$;\\
$F_3$: $\{1,5\}$ and $\{2,3,4\}$;\\ 
$F_4$: $\{1,2,3\}$ and $\{4,5\}$;\\
$F_5$: $\{1,2,5\}$ and $\{3,4\}$;\\ 
$F_{6}$: $\{1,4,5\}$ and $\{2,3\}$;\\ 
$F_{7}$: $\{1,2,3,4\}$ and $\{5\}$;\\ 
$F_{8}$: $\{1,2,3,5\}$ and $\{4\}$;\\ 
$F_{9}$: $\{1,2,4,5\}$ and $\{3\}$;\\ 
$F_{10}$: $\{1,3,4,5\}$ and $\{2\}$ 
}
&
\includegraphics[scale=1]{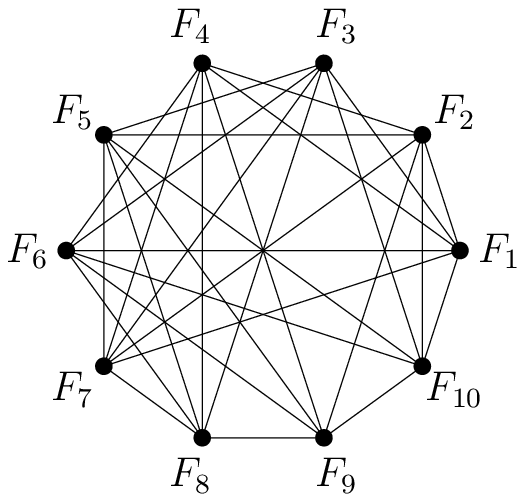}\\\hline
\addtocounter{rownum}{1}\arabic{rownum} &\includegraphics[scale=1]{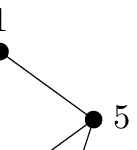}&\parbox[b]{0.35\textwidth}{
$F_1$: $\{1\}$ and $\{2,3,4,5\}$;\\
$F_2$: $\{1,2\}$ and $\{3,4,5\}$;\\
$F_3$: $\{1,5\}$ and $\{2,3,4\}$;\\ 
$F_4$: $\{1,2,3\}$ and $\{4,5\}$;\\
$F_5$: $\{1,2,4\}$ and $\{3,5\}$;\\ 
$F_{6}$: $\{1,3,5\}$ and $\{2,4\}$;\\ 
$F_{7}$: $\{1,4,5\}$ and $\{2,3\}$;\\ 
$F_{8}$: $\{1,2,3,4\}$ and $\{5\}$;\\ 
$F_{9}$: $\{1,2,3,5\}$ and $\{4\}$;\\ 
$F_{10}$: $\{1,2,4,5\}$ and $\{3\}$;\\ 
$F_{11}$: $\{1,3,4,5\}$ and $\{2\}$ 
}&\includegraphics[scale=1]{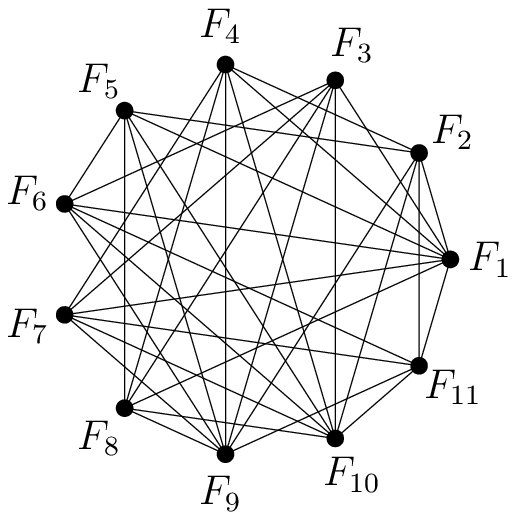}\\\hline

\addtocounter{rownum}{1}\arabic{rownum} &\includegraphics[scale=1]{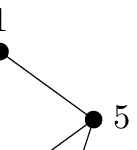}&\parbox[b]{0.35\textwidth}{
$F_1$: $\{1\}$ and $\{2,3,4,5\}$;\\
$F_2$: $\{1,2\}$ and $\{3,4,5\}$;\\
$F_3$: $\{1,5\}$ and $\{2,3,4\}$;\\ 
$F_4$: $\{1,2,3\}$ and $\{4,5\}$;\\
$F_5$: $\{1,2,4\}$ and $\{3,5\}$;\\
$F_6$: $\{1,2,5\}$ and $\{3,4\}$;\\ 
$F_{7}$: $\{1,3,5\}$ and $\{2,4\}$;\\ 
$F_{8}$: $\{1,4,5\}$ and $\{2,3\}$;\\ 
$F_{9}$: $\{1,2,3,4\}$ and $\{5\}$;\\ 
$F_{10}$: $\{1,2,3,5\}$ and $\{4\}$;\\ 
$F_{11}$: $\{1,2,4,5\}$ and $\{3\}$;\\ 
$F_{12}$: $\{1,3,4,5\}$ and $\{2\}$ 
}&\includegraphics[scale=1]{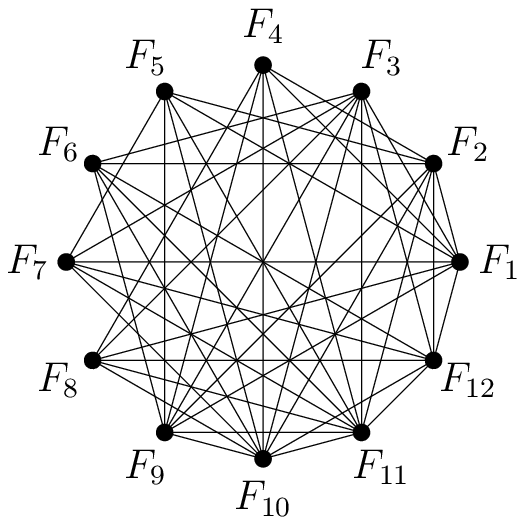}\\\hline

\addtocounter{rownum}{1}\arabic{rownum} &\includegraphics[scale=1]{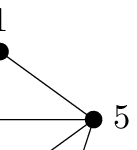}&\parbox[b]{0.35\textwidth}{
$F_1$: $\{1\}$ and $\{2,3,4,5\}$;\\
$F_2$: $\{1,2\}$ and $\{3,4,5\}$;\\
$F_3$: $\{1,5\}$ and $\{2,3,4\}$;\\ 
$F_4$: $\{1,2,3\}$ and $\{4,5\}$;\\
$F_5$: $\{1,2,4\}$ and $\{3,5\}$;\\ 
$F_{6}$: $\{1,3,5\}$ and $\{2,4\}$;\\ 
$F_{7}$: $\{1,4,5\}$ and $\{2,3\}$;\\ 
$F_{8}$: $\{1,2,3,4\}$ and $\{5\}$;\\ 
$F_{9}$: $\{1,2,3,5\}$ and $\{4\}$;\\ 
$F_{10}$: $\{1,2,4,5\}$ and $\{3\}$;\\ 
$F_{11}$: $\{1,3,4,5\}$ and $\{2\}$ 
}&\includegraphics[scale=1]{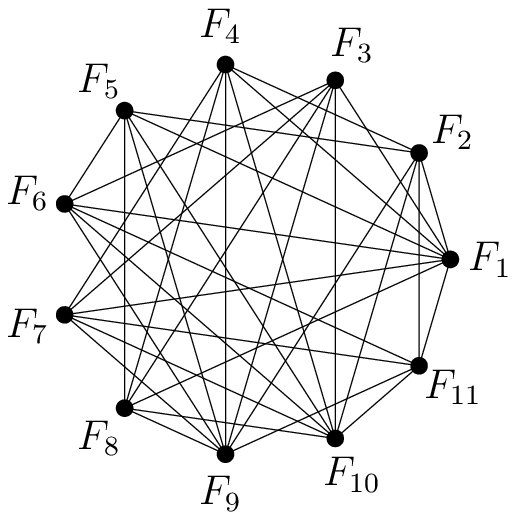}\\\hline
\addtocounter{rownum}{1}\arabic{rownum} &\includegraphics[scale=1]{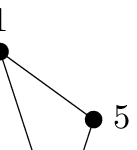}&\parbox[b]{0.35\textwidth}{
$F_1$: $\{1\}$ and $\{2,3,4,5\}$;\\
$F_2$: $\{1,2\}$ and $\{3,4,5\}$;\\
$F_3$: $\{1,5\}$ and $\{2,3,4\}$;\\ 
$F_4$: $\{1,2,3\}$ and $\{4,5\}$;\\
$F_5$: $\{1,2,5\}$ and $\{3,4\}$;\\ 
$F_{6}$: $\{1,4,5\}$ and $\{2,3\}$;\\ 
$F_{7}$: $\{1,2,3,4\}$ and $\{5\}$;\\ 
$F_{8}$: $\{1,2,3,5\}$ and $\{4\}$;\\ 
$F_{9}$: $\{1,2,4,5\}$ and $\{3\}$;\\ 
$F_{10}$: $\{1,3,4,5\}$ and $\{2\}$ 
}
&
\includegraphics[scale=1]{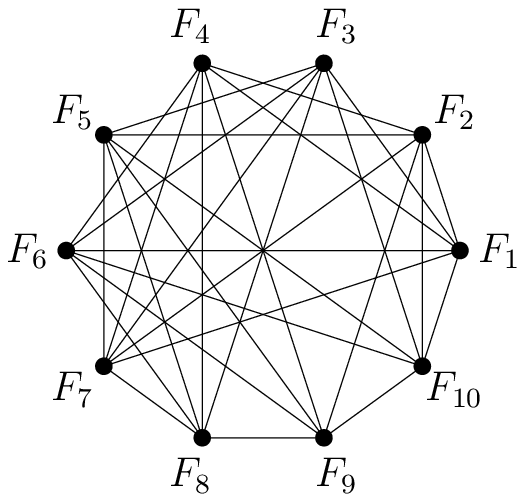}\\\hline
\addtocounter{rownum}{1}\arabic{rownum} &\includegraphics[scale=1]{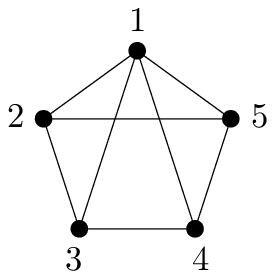}&\parbox[b]{0.35\textwidth}{
$F_1$: $\{1\}$ and $\{2,3,4,5\}$;\\
$F_2$: $\{1,2\}$ and $\{3,4,5\}$;\\
$F_3$: $\{1,3\}$ and $\{2,4,5\}$;\\
$F_4$: $\{1,4\}$ and $\{2,3,5\}$;\\
$F_5$: $\{1,5\}$ and $\{2,3,4\}$;\\ 
$F_6$: $\{1,2,3\}$ and $\{4,5\}$;\\
$F_7$: $\{1,2,5\}$ and $\{3,4\}$;\\ 
$F_8$: $\{1,3,4\}$ and $\{2,5\}$;\\ 
$F_{9}$: $\{1,4,5\}$ and $\{2,3\}$;\\ 
$F_{10}$: $\{1,2,3,4\}$ and $\{5\}$;\\ 
$F_{11}$: $\{1,2,3,5\}$ and $\{4\}$;\\ 
$F_{12}$: $\{1,2,4,5\}$ and $\{3\}$;\\ 
$F_{13}$: $\{1,3,4,5\}$ and $\{2\}$ 
}&\includegraphics[scale=1]{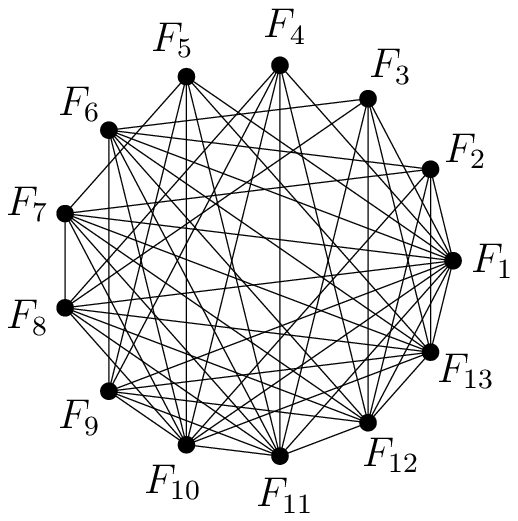}\\\hline

\addtocounter{rownum}{1}\arabic{rownum} &\includegraphics[scale=1]{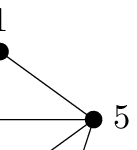}&\parbox[b]{0.35\textwidth}{
$F_1$: $\{1\}$ and $\{2,3,4,5\}$;\\
$F_2$: $\{1,2\}$ and $\{3,4,5\}$;\\
$F_3$: $\{1,5\}$ and $\{2,3,4\}$;\\ 
$F_4$: $\{1,2,3\}$ and $\{4,5\}$;\\
$F_5$: $\{1,2,4\}$ and $\{3,5\}$;\\
$F_6$: $\{1,2,5\}$ and $\{3,4\}$;\\ 
$F_{7}$: $\{1,3,5\}$ and $\{2,4\}$;\\ 
$F_{8}$: $\{1,4,5\}$ and $\{2,3\}$;\\ 
$F_{9}$: $\{1,2,3,4\}$ and $\{5\}$;\\ 
$F_{10}$: $\{1,2,3,5\}$ and $\{4\}$;\\ 
$F_{11}$: $\{1,2,4,5\}$ and $\{3\}$;\\ 
$F_{12}$: $\{1,3,4,5\}$ and $\{2\}$ 
}&\includegraphics[scale=1]{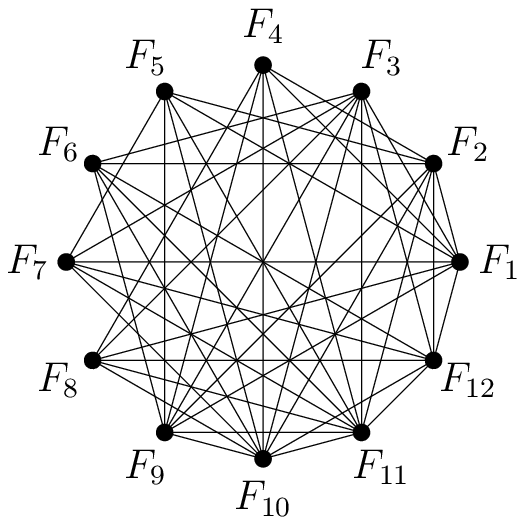}\\\hline
\addtocounter{rownum}{1}\arabic{rownum} &\includegraphics[scale=1]{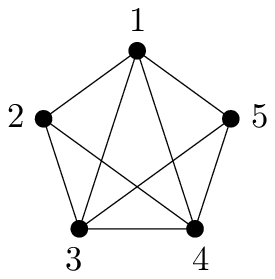}&\parbox[b]{0.35\textwidth}{
$F_1$: $\{1\}$ and $\{2,3,4,5\}$;\\
$F_2$: $\{1,2\}$ and $\{3,4,5\}$;\\
$F_3$: $\{1,3\}$ and $\{2,4,5\}$;\\
$F_4$: $\{1,4\}$ and $\{2,3,5\}$;\\
$F_5$: $\{1,5\}$ and $\{2,3,4\}$;\\ 
$F_6$: $\{1,2,3\}$ and $\{4,5\}$;\\
$F_7$: $\{1,2,4\}$ and $\{3,5\}$;\\
$F_8$: $\{1,2,5\}$ and $\{3,4\}$;\\ 
$F_9$: $\{1,3,5\}$ and $\{2,4\}$;\\ 
$F_{10}$: $\{1,4,5\}$ and $\{2,3\}$;\\ 
$F_{11}$: $\{1,2,3,4\}$ and $\{5\}$;\\ 
$F_{12}$: $\{1,2,3,5\}$ and $\{4\}$;\\ 
$F_{13}$: $\{1,2,4,5\}$ and $\{3\}$;\\ 
$F_{14}$: $\{1,3,4,5\}$ and $\{2\}$ 
}&\includegraphics[scale=1]{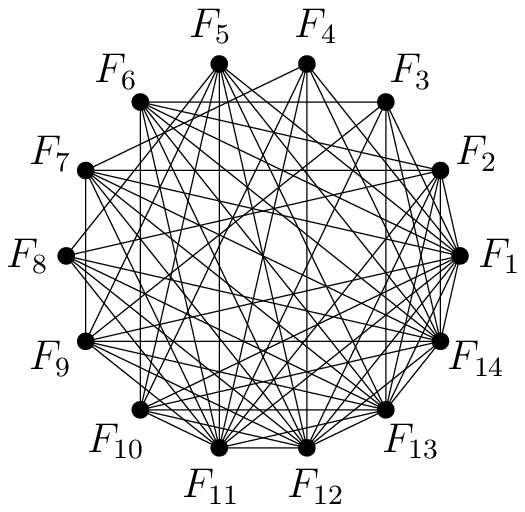}\\\hline
\addtocounter{rownum}{1}\arabic{rownum} &\includegraphics[scale=1]{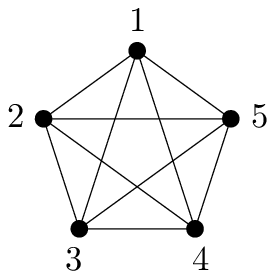}&\parbox[b]{0.35\textwidth}{This is four-dimensional permutahedron $\Pi_4$. It satisfies conditions of the Voronoi theorem \ref{vorthm}}&\\\hline

\end{longtable}

\end{document}